\documentclass[11pt,letterpaper]{amsart}

\usepackage{graphicx, amsfonts, amssymb}
\usepackage{mathrsfs, amsmath, amsthm}
\usepackage{verbatim}
\usepackage{relsize}
\usepackage{enumerate}
\usepackage{hyperref}
\usepackage{version}
\usepackage{blindtext}
\usepackage{mathtools}
\usepackage{color}
\usepackage{ stmaryrd }
\usepackage{ esint }
\usepackage{bigints}
\usepackage{bbm}

\usepackage{array}
\newtheorem{theorem}{Theorem}[section]
\newtheorem{cor}{Corollary}[section]
\newtheorem{thmx}{Theorem}

\newtheorem{lem}{Lemma}[section]

\theoremstyle{definition}

\newtheorem{rem}{Remark}[section]
\newtheorem*{theorem*}{Theorem}

\numberwithin{equation}{section}

\newcommand{\R}{\mathbb R}
\newcommand{\N}{\mathbb N}

\newcommand{\C}{\mathbb C}
\newcommand{\D}{\mathbb D}
\def\XXint#1#2#3{{\setbox0=\hbox{$#1{#2#3}{\int}$}
     \vcenter{\hbox{$#2#3$}}\kern-.5\wd0}}

\begin{document}

\title{The Law of the Iterated Logarithm for smooth functions}

\author[Jos\'e G. Llorente, A. Nicolau]{Jos\'e G. Llorente and Artur Nicolau}

\address{J. G. Llorente: Departamento de An\'alisis Matem\'atico y Matem\'atica Aplicada. Facultad de Matem\'aticas. Universidad Complutense de Madrid.}
\email{josgon20@ucm.es}

\address{A. Nicolau: Departament de Matem{\`a}tiques\\
Universitat Aut\`onoma de Barcelona\\ and Centre de Recerca Matem{\`a}tica \\08193 Barcelona}
\email{artur.nicolau@uab.cat}

%\subjclass[2010]{31B05, 31B25, 60G42, 30-XX, 30Cxx, }

%\keywords{Law of the Iterated Logarithm, Bloch functions, self-improving, growth of the laplacian, positive harmonic functions. }

\thanks{Both authors are partially supported by the Spanish Ministerio de Ciencia e Innovaci\'on (grant PID2021-123151NB-I00). The second author is also supported by  the Generalitat de Catalunya (grant 2021 SGR 00071) and the Spanish Research
Agency through the María de Maeztu Program (CEX2020-001084-M)}

\begin{abstract}
    A version of the Law of the Iterated Logarithm for smooth functions in the upper-half space is proved. As a consequence, we show that certain size conditions on the gradient and the gradient of the laplacian of a smooth function, lead to self-improvement growth properties. The results are applied in situations where harmonicity is not present.   
\end{abstract}

\maketitle

\section{Introduction}

In this paper, we study self-improving growth properties of smooth functions in the upper half-space $\R^{d+1}_{+} = \{ (x,y): x\in \R^d \, , y >0 \}$ under size restrictions on their derivatives. A function $u \in C^1 (\R^{d+1}_{+})$ is called a \textit{Bloch} function if 
$$
B= \sup \{y |\nabla u (x,y)|: (x,y) \in \R^{d+1}_{+}\} < \infty
$$ 
and we will refer to $B$ as the Bloch constant of $u$. Analytic Bloch functions in a half-plane play a crucial role in several topics as conformal mappings, harmonic measure,  Bergman spaces, trigonometric series and Zygmund functions among others (see \cite{P, ACP, GM, HKZ, DS, M, M2} and references therein). Significant examples of harmonic Bloch functions are those of the form $u = \log |f'|$ where $f: \R_2^+ \to \C$  is a conformal mapping as well as harmonic extensions of lacunary trigonometric series with bounded coefficients. Observe that if $u$ is a Bloch function in $\R^{d+1}_+$ with Bloch constant $B$, then the following global estimate holds
$$
\limsup_{y\to 0} \frac{|u(x,y)|}{\log \big ( 1/y \big )} \leq B , \quad x \in \R^d. 
$$
The celebrated Makarov's \textit{Law of the Iterated Logarithm} (LIL) for Bloch harmonic functions says that, in the presence of harmonicity, the  global bound $\log (1/y) $ can be substantially improved for almost every point $x\in \R^d$. The following theorem is the half-space version of Makarov's original result in the unit disk (\cite{M, M2}). See also \cite{BM, LL} for further results, still in the harmonic case.  

\begin{thmx}
\label{mak}
Let $u$ be a harmonic Bloch function in $\R^{d+1}_+$ with Bloch constant $B$.
% = \sup \{y |\nabla u(x,y)| : (x,y) \in \R_+^{d+1} \} < \infty$. 
Then there exists a constant $C>0$ only depending on $d$ and $B$ such that
\begin{equation}\label{lil}
\limsup_{y\to 0} \frac{ \displaystyle |u(x,y)| }{\sqrt{\log ( 1/y ) \log \log \log ( 1/y  )} } \leq C
\end{equation}
for almost every $x\in \R^d$. Moreover \eqref{lil} is sharp in the sense that there exist harmonic Bloch functions $u$ for which the $\limsup$ in \eqref{lil} is bounded below by a positive constant for almost every $x \in \R^d$. 
\end{thmx}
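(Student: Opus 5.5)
We will prove the upper bound \eqref{lil} by comparing $u$ with a dyadic martingale and then applying an exponential square-function (Azuma/Hoeffding) estimate together with Borel--Cantelli. Everything is local, so it suffices to work with $x$ in the unit cube $Q_0=[0,1)^d$.

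\emph{Step 1: the martingale.} For $n\ge 0$, let $\mathcal D_n$ be the dyadic subcubes of $Q_0$ of side $2^{-n}$. For $Q\in\mathcal D_n$ let $z_Q=(x_Q,2^{-n})$, where $x_Q$ is the center of $Q$. Define $S_n(x)=u(z_Q)$ for $x\in Q$. Since $u$ is Bloch, a hyperbolic-distance estimate gives $|u(x,y)-S_n(x)|\le C(d)B$ whenever $2^{-n-1}\le y\le 2^{-n}$. Hence it suffices to prove the law of the iterated logarithm for $S_n$ with normalisation $\sqrt{n\log\log n}$; note that $\log(1/y)\asymp n$ at this scale.

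$S_n$ is not itself a martingale. Let $\mathcal F_n$ be the $\sigma$-algebra generated by $\mathcal D_n$ and set $M_n=\mathbb E[S_{N}\mid\mathcal F_n]$ in a suitable limiting sense. More concretely, use harmonicity as follows. By the Poisson representation at height $2^{-n-1}$, $u(z_Q)$ equals the Poisson average of $u(\cdot,2^{-n-1})$. The Poisson kernel at height $2^{-n}$ puts mass on the cubes of $\mathcal D_{n+1}$ with tails decaying like $(1+k)^{-d-1}$ at distance $k2^{-n}$. Together with the Bloch bound $|u(z_{Q'})-u(z_{Q''})|\le CB\log(2+k)$ for such cubes, this gives
\[
\Big|u(z_Q)-2^{-d}\sum_{Q'\in\mathcal D_{n+1},\,Q'\subset Q}u(z_{Q'})\Big|\le C_1B .
\]
This holds with a constant $C_1$, which is not yet good enough. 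So instead write $S_n=M_n+A_n$, where $M_n$ is the martingale with increments $\Delta_{k+1}=S_{k+1}-\mathbb E[S_{k+1}\mid\mathcal F_k]$, and $A_n$ is the predictable drift $A_n=\sum_{k<n}\big(\mathbb E[S_{k+1}\mid\mathcal F_k]-S_k\big)$.

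\emph{Step 2: killing the drift.} This is the main obstacle. A bounded drift per step could accumulate to order $n$, which would destroy the $\sqrt{n}$ bound. The remedy is to exploit exact harmonicity through the mean value property in $y$. Compare $S_k$ not with point values but with averages $\tilde S_k(x)=\frac{1}{|Q|}\int_Q u(t,2^{-k})\,dt$. The difference $|\tilde S_k-S_k|\le CB$ is harmless. Now
\[
\mathbb E[\tilde S_{k+1}\mid\mathcal F_k]-\tilde S_k=\frac{1}{|Q|}\int_Q\big(u(t,2^{-k-1})-u(t,2^{-k})\big)\,dt ,
\]
and one needs $\sum_{k<n}$ of these to be $O(\sqrt n)$ almost everywhere. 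I would attack this by writing $u(t,y)=\int_y^{1}(-\partial_s u)\,ds+u(t,1)$ and using that $\partial_s u$ is harmonic with $|\partial_s u|\le B/s$. Then expand in Haar/Littlewood--Paley pieces: the sum of drifts is an average over cubes of $\int_{2^{-n}}^1\partial_s u(t,s)\,ds$, whose square function is controlled by $\int y|\nabla u|^2\,dy\lesssim B^2 n$ on Carleson boxes. Concretely, the drift is bounded by $\int_Q\int_{2^{-n}}^{1}\ldots$, and a Green's identity argument shows that the drift at generation $k$ is itself a martingale difference plus a term summable in $k$. I expect this bookkeeping to be the delicate point. An alternative route is to replace dyadic cubes by the martingale $u(B_t)$ obtained by running Brownian motion from $(x,1)$ until it hits height $2^{-n}$; this is an exact martingale, with Itô quadratic variation $\int|\nabla u|^2dt\lesssim B^2\cdot(\text{time}\sim\log(1/y))$. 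Using the Brownian route with a nontangential comparison avoids the drift entirely, and I would try this first.

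\emph{Step 3: exponential estimate and Borel--Cantelli.} For the martingale $M_n$, with increments bounded by $CB$ (or with quadratic variation $\le CB^2 n$ in the Brownian version), Azuma's inequality gives
\[
|\{x:\max_{k\le n}|M_k(x)|>\lambda\}|\le 2e^{-\lambda^2/(2C^2B^2n)} .
\]
Take $n_j=e^{j}$ (or $2^j$) and $\lambda_j=C'B\sqrt{n_j\log\log n_j}$. The resulting probabilities are about $j^{-2}$, which are summable. Borel--Cantelli plus the maximal inequality interpolating between $n_j$ and $n_{j+1}$ then yield \eqref{lil} with $C$ depending only on $d$ and $B$; note $\log\log n=\log\log\log(1/y)$.

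\emph{Sharpness.} Take $d=1$ and the lacunary example $u(x,y)=\sum_k e^{-2^k y}\cos(2^k x)$, a harmonic Bloch function. Its partial sums along $y=2^{-n}$ behave like sums of nearly independent variables of variance about $n/2$. The classical Salem--Zygmund/Weiss law of the iterated logarithm for lacunary series gives a $\limsup$ bounded below almost everywhere. In higher dimensions, use the same function of $x_1$ only.
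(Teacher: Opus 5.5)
Your Step 3 (Azuma's maximal inequality plus Borel--Cantelli along $n_j=e^j$) and the lacunary sharpness example are fine; the paper itself simply attributes sharpness to Makarov. But there is a genuine gap exactly where you flag it: you never produce a martingale that stays within $o(\sqrt{n\log\log n})$ of $u$.

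In $S_n=M_n+A_n$ each drift term is only shown to be $O(B)$, so a priori $A_n$ can be of order $n$. This is exactly what happens for the non-harmonic Bloch function $\log(1/y)$, where $A_n=n\log 2$ and $M_n\equiv 0$. The way you use harmonicity (one-step Poisson averaging) does not exclude this. The sentence ``a Green's identity argument shows the drift at generation $k$ is itself a martingale difference plus a term summable in $k$'' carries the whole content of the theorem, and it is asserted, not proved.

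The Brownian alternative is a legitimate known route, but as written it skips everything nontrivial:
\begin{enumerate}
\item[(i)] Brownian motion from $(x,1)$ exits according to the Poisson kernel, not at $x$. You need the $h$-transformed process conditioned to exit at $x$.
\item[(ii)] Writing $W_s=(X_s,Y_s)$ for the path, the quadratic variation satisfies $\int_0^{\tau_y}|\nabla u(W_s)|^2\,ds\le B^2\int_0^{\tau_y}Y_s^{-2}\,ds$. This is hyperbolic, not Euclidean, time, and it must be shown to be $\lesssim\log(1/y)$ almost surely along conditioned paths.
\item[(iii)] You must transfer the almost-sure path statement to the vertical limit at $x$, and the conditioned path is not confined to a fixed cone.
\end{enumerate}

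The missing ingredient is an elementary flux identity on the box $Q\times(s,t)$. This is how the paper gets the upper bound, as the case $\psi\equiv\varepsilon\equiv B$, $\Delta u\equiv 0$ of Theorem \ref{maintheorem}. Set $T=u-y\,\partial_y u$. For harmonic $u$,
\[
\partial_y\int_Q T(x,y)\,dx=-y\int_Q\partial_y^2u\,dx=y\int_Q\Delta_x u\,dx=y\int_{\partial Q}\partial_\nu u\,d\sigma ,
\]
and $|y\,\partial_\nu u|\le B$. Hence $\big|\fint_Q T(\cdot,t)-\fint_Q T(\cdot,s)\big|\le 2dB(t-s)/\ell(Q)\le 2dB$ for $0<s<t\le \ell(Q)$.

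Consequently $T_Q=\lim_{y\to 0}\fint_Q T(\cdot,y)$ exists. Averages at a common height are additive over the $2^d$ children, so $T_n(x)=T_{Q_n(x)}$ is an \emph{exact} dyadic martingale: there is no drift to kill. Combine this with $|y\,\partial_y u|\le B$ and the horizontal bound $|T(z,y)-T(x,y)|\le B|z-x|/y+2B$. You get $|u(x,y)-T_n(x)|\le CB$ for $2^{-n-1}\le y\le 2^{-n}$ and $|T_k-T_{k-1}|\le CB$, and your Step 3 then applies verbatim to $T_n$.

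With this lemma your averaged process also becomes tractable: $\tilde S_n=T_n+E_n$ with $E_n$ bounded. Its drift is therefore $E_n-E_0$ minus a martingale with bounded increments. That is controlled, but it telescopes rather than being summable, so the ``summable in $k$'' claim is not what one should expect either.
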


Our main purpose is to understand to what extent harmonicity can be relaxed in this result. This problem has already been considered in \cite{GKLN, GK}  but the results there do not provide the right analog of the LIL. Note that the function $u(x,y)= \log(1/y)$ is Bloch and exhibits the maximal vertical growth allowed by the Bloch condition at every $x \in \R^d$ so no self-improvement occurs. Since $\Delta u(x,y) = y^{-2}$, this example suggests that in order to obtain meaningful self-improvement results for non-harmonic Bloch functions, the behavior of their laplacian may play an important role. 

\medskip

We introduce two classes of smooth functions in $\R^{d+1}_{+}$ whose gradient and the gradient of the laplacian have controlled growth in terms of certain gauge functions. Let  $\psi  , \varepsilon  : (0,1] \to (0, +\infty )$ and define
\begin{align*}
\mathcal{B}_{\psi}  & \equiv   \{ u \in C^1 (\R^{d+1}_{+}) \, :   y|\nabla u (x,y)|  \leq \psi (y),   \, \, x \in \R^d , \, 0 <y\leq 1 \,  \},  \vspace{0.12cm} \\
\mathcal{B}_{\psi , \varepsilon } &  \equiv   \mathcal{B}_{\psi} \cap  \{ u \in C^3 (\R^{d+1}_{+}) \, : y^3 |\nabla \Delta u(x,y)| \leq \varepsilon (y),  \, \, x \in \R^d , \, 0 <y\leq 1 \, \} .  \vspace{0.12cm} 
\end{align*}
We will eventually require the following assumptions on $\psi$ and $\varepsilon$:
\begin{eqnarray}
& \psi \, \text{is non-increasing, }   \label{nonincr} \vspace{0.12cm}\\
& \varepsilon \leq \psi , \label{epsipsi}\vspace{0.12cm}\\
& \displaystyle \frac{1}{y}\int_{0}^y \, \psi (t) dt  \leq   A \, \psi (y), \quad 0<y\leq 1 ,  \label{psi} \vspace{0.12cm}
\end{eqnarray}
\noindent for some positive constant $A $. Observe that \eqref{nonincr} and \eqref{psi} easily imply the following doubling property:
\begin{equation}\label{doubling}
\psi (y/2)  \leq 2A \psi (y), \quad 0<y \leq 1 .
\end{equation}
%For $\psi , \varepsilon, \eta$ satisfying \eqref{nonincr}, \eqref{epsipsi}, \eqref{psi}, \eqref{eta} we define
Associated to $\psi$,  define the \textit{square function} $\Psi$ as
\begin{equation}\label{Psi}
\Psi (y) = \int_y^1  \frac{\psi^2 (t)}{t} dt \, \, , \qquad 0 < y \leq 1 .
\end{equation}
The choice of the constant function $\psi  \equiv  B>0$ gives $\Psi (y) = B^2 \log (1/y)$ and $\mathcal{B}_\psi$ becomes then the class of Bloch functions with Bloch constant not greater than $B$. We now state the main result of this paper.

\begin{theorem}\label{maintheorem}
Suppose that $u \in \mathcal{B}_{\psi , \varepsilon }$ where $\psi , \varepsilon $ satisfy \eqref{nonincr}, \eqref{epsipsi}, \eqref{psi} and let $\Psi$ be the square function associated to $\psi$ defined by \eqref{Psi}. Then there exists a constant $C>0$ only depending on $d$, $\psi$, $\varepsilon$ such that 
\begin{equation}\label{mainlil}
\limsup_{y\to 0} \frac{ \displaystyle \Big|u(x,y) - \int_{y}^1 t \Delta u(x,t) dt \Big| }{\sqrt{\Psi (y) \log \log \Psi (y)} } \leq C, 
\end{equation}
for almost every $x\in \R^d$.
\end{theorem}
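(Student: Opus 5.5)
We will prove \eqref{mainlil} by building, from $u$, a dyadic martingale whose increments reproduce the vertical oscillation of $u$ once the laplacian term has been removed. The LIL for martingales with bounded increments (in the quadratic-variation form of Stout / Chang--Wilson--Wolff) then gives the estimate. For $n\ge 0$, let $y_n=2^{-n}$. For a dyadic cube $Q\subset\R^d$ of side $2^{-n}$, set
$$
S_n(x)=\frac{1}{|Q|}\int_Q v(z,y_n)\,dz ,\qquad x\in Q,
\qquad\text{where}\qquad
v(x,y)=u(x,y)-\int_y^1 t\,\Delta u(x,t)\,dt .
$$
Then $S_n=E[S_{n+1}\mid\mathcal F_n]+R_n$. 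We will write $S_n=M_n+\sum_{k<n}R_k$, where $M_n$ is a dyadic martingale with increments $D_k=S_{k+1}-E[S_{k+1}\mid\mathcal F_k]$.

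\textbf{Step 1: increment bounds.} Because $y|\nabla u|\le\psi(y)$, and by \eqref{doubling}, $|v(z,y_{k+1})-v(z',y_{k+1})|\lesssim\psi(y_{k+1})$ on a cube of side $2^{-k}$. The $t\Delta u$ term here needs a bound on the horizontal gradient of $\int_y^1 t\Delta u\,dt$. It equals $\int_y^1 t\nabla_x\Delta u\,dt$, which is dominated by $\int_y^1 \varepsilon(t)t^{-2}dt$. This is too large as it stands, so the horizontal difference will be handled differently. We will differentiate in $y$: $\partial_y v=\partial_y u+y\Delta u$. The task is to show that the averages of $\partial_y v$ over cubes of side $y$ are small.

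\textbf{Step 2: the drift $R_k$.} This is the key point. We write $\Delta u=\Delta_x u+\partial_{yy}u$, so that
$$
\partial_y\Big(y\,\partial_y u\Big)=\partial_y u+y\,\partial_{yy}u=\partial_y v-y\Delta_x u .
$$
Averaging over a cube $Q$ of side comparable to $y$, the term $y\Delta_x u$ becomes a boundary flux, $y\,|Q|^{-1}\int_{\partial Q}\partial_\nu u$. This has size $O(\psi(y))$, so it does not vanish. The plan is instead to compare $u(\cdot,y)$ with the Poisson extension of $u(\cdot,y/2)$ at height $y/2$: call it $h$, harmonic in $\{t>y/2\}$. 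Then $w=u-h$ satisfies $\Delta w=\Delta u$, and Green's representation gives
$$
w(x,y)=-\int G\,\Delta u .
$$
Taylor-expanding $\Delta u$ in the horizontal variable, with error controlled by $t^3|\nabla\Delta u|\le\varepsilon(t)$, shows that $w(x,y)$ equals $-\int_{y/2}^{y}(t-y/2)\Delta u(x,t)\,dt$ up to $O(\varepsilon(y))$. The main term is exactly what the correction $\int t\Delta u$ cancels, up to a telescoping error. For the harmonic part $h$, the conditional expectation of $h$ across a generation is $h$ itself up to a summable oscillation, exactly as in Makarov's proof. Hence $|R_k|\lesssim\varepsilon(y_k)+(\text{harmonic error})\lesssim\psi(y_k)$. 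This alone is not good enough. We need the sum $\sum_k R_k$ to be $o\big(\sqrt{\Psi\log\log\Psi}\big)$, or to be absorbable. We will instead fold the harmonic-part oscillation into the martingale, as Makarov does, and keep only the $\varepsilon$-terms as drift. Controlling that drift, possibly via \eqref{psi} and a weighted splitting, is the step I expect to be the main obstacle.

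\textbf{Step 3: LIL and return to pointwise values.} The martingale increments satisfy $|D_k|\lesssim\psi(y_k)$, so the quadratic variation is $\langle M\rangle_n\lesssim\sum_{k\le n}\psi(y_k)^2\simeq\Psi(y_n)$, by \eqref{nonincr} and \eqref{doubling}. The martingale LIL then gives
$$
\limsup_n \frac{|M_n|}{\sqrt{\Psi(y_n)\log\log\Psi(y_n)}}\le C \quad\text{a.e.}
$$
Finally, for $y_{n+1}\le y\le y_n$ and $x\in Q$, we have $|v(x,y)-S_n(x)|\lesssim\psi(y_n)$ by Step 1. Here $\psi(y_n)^2\lesssim\Psi(y_n)$ for small $y$: this follows from \eqref{nonincr}, since $\Psi(y)\ge\psi(2y)^2\log 2$, together with doubling. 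Hence the discretization error is negligible, and \eqref{mainlil} follows.
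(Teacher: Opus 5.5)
There is a genuine gap, and it sits in Step 2, which you leave open. You tie the averaging height to the generation ($S_n=\fint_Q v(\cdot,y_n)$ with $l(Q)=y_n$). As a result the drift $R_k=\fint_Q\big(v(\cdot,y_k)-v(\cdot,y_{k+1})\big)$ has size up to a constant times $\psi(y_k)$, with no visible cancellation. Summing gives only $\sum_k\psi(y_k)\approx\int_{y_n}^1\psi(t)t^{-1}\,dt$, which is the trivial bound \eqref{globalbound}.

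The Poisson/Green comparison you sketch does not rescue this as stated. The horizontal integral of the Green function of $\{t>y/2\}$ with pole at $(x,y)$ is $\min(t-y/2,\,y/2)$. So the leading term is not $-\int_{y/2}^{y}(t-y/2)\Delta u(x,t)\,dt$; it involves all heights $t>y/2$, including $t>1$, where nothing is assumed.

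The missing idea is one you actually wrote down and then discarded: for $T=v-y\,\partial_y u$ you have $\partial_y T=y\,\Delta_x u$. The fix is to stop letting the cube shrink with the height. Fix a cube $Q$ of side $l\le 1$. For $0<s<t\le l$, Green's identity on $Q\times(s,t)$ turns $\fint_Q\big(T(\cdot,t)-T(\cdot,s)\big)$ into a lateral flux, and \eqref{psi} gives
\[
\Big|\fint_Q T(\cdot,t)-\fint_Q T(\cdot,s)\Big|\le\frac{2d}{l}\int_s^t\psi(y)\,dy\le 2dA\,\psi(l).
\]
The flux need not vanish; it only needs to be integrable down to $y=0$ with $Q$ fixed. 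Hence $T_Q=\lim_{y\to0}\fint_Q T(\cdot,y)$ exists. Letting $y\to0$ in $\fint_Q T(\cdot,y)=2^{-d}\sum_j\fint_{Q^j}T(\cdot,y)$ shows that $Q\mapsto T_Q$ is an exact dyadic martingale, with no drift at all. Since $|y\,\partial_y u|\le\psi(y)$, your $S_n$ differs from this martingale by $O(\psi(y_n))$, so no drift analysis is needed.

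You also misjudge Step 1. The horizontal gradient of $\int_y^1 t\,\Delta u\,dt$ is bounded by
\[
\int_y^1\varepsilon(t)t^{-2}\,dt\le\psi(y)\int_y^1t^{-2}\,dt\le\psi(y)/y,
\]
by \eqref{epsipsi} and \eqref{nonincr}. This is not too large; it is exactly what is needed, and it gives $|T(z,y)-T(x,y)|\le2(|z-x|/y+1)\psi(y)$. Combine this with the display above (take $t=y$ and $s\to0$) and \eqref{doubling}. You get $|T(x,y)-T_Q|\lesssim\psi(l)$ for $x\in Q$ and $l/2\le y\le l$. Hence the increments are $\lesssim\psi(2^{-k})$ and $\langle T\rangle_n\lesssim\Psi(2^{-n})$.

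As written, Step 3 cites Step 1 for $|v(x,y)-S_n(x)|\lesssim\psi(y_n)$, which Step 1 had declared unavailable. Moreover, comparing $v$ at two different heights directly would require a pointwise bound on $\Delta u$, which is not assumed. Going through $T$ and $v=T+y\,\partial_y u$ avoids this. With these two repairs your Step 3 finishes the proof, and the argument becomes the paper's.
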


One of the remarkable ideas of the work of Makarov was that the boundary behavior of a harmonic Bloch function in a half space can be related to the asymptotic behavior of a dyadic martingale (see \cite{M, M2}). Actually the mean value property of harmonic functions can be understood as an analog  to the cancellation property of dyadic martingales. When the function $u \in C^3 (\R^{d+1}_+)$ is not harmonic, it turns out that instead of $u$, the following expression  
$$
T(x,y) = u(x,y) - y \frac{\partial u}{\partial y}(x,y) - \int_y^1 h \Delta u(x,h) dh, \quad 0<y<1, \, x \in \R^d , 
$$
can be compared with a dyadic martingale. Since one can estimate the quadratic variation of this dyadic martingale by the functions $\psi$ and $\varepsilon$, the proof of Theorem \ref{maintheorem} can be transferred to the context of dyadic martingales. 

The assumptions on the function $\varepsilon$ in the next two corollaries guarantee that the integral in the numerator of \eqref{mainlil} can be absorbed by the denominator of \eqref{mainlil}. 

\begin{cor}\label{corlil1}
Let $\psi$, $\varepsilon$ and $u \in \mathcal{B}_{\psi , \varepsilon }$ be as in Theorem \ref{maintheorem}. Suppose, in addition, that
\begin{equation}\label{epsilon1}
\limsup_{y\to 0} \frac{\displaystyle \int_y^1 \varepsilon (t) t^{-1} dt}{\sqrt{\Psi (y) \log \log \Psi (y)}} = M < \infty . 
%\int_y^1 \frac{\varepsilon (t)}{t} dt \leq C \sqrt{\Psi (y) \log \log \Psi (y)}
\end{equation}
Then there exists a constant $C>0$ only depending on $d$, $\psi$ and $M$, such that
$$
\limsup_{y\to 0} \frac{ \displaystyle |u(x,y)| }{\sqrt{\Psi (y) \log \log \Psi (y)} } \leq C , 
$$
for almost every $x\in \R^d$.
\end{cor}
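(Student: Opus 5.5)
The plan is to deduce the corollary from Theorem \ref{maintheorem} by showing that the correction term $\int_y^1 t\,\Delta u(x,t)\,dt$ is dominated by $\int_y^1 \varepsilon(t)t^{-1}dt$, up to a bounded additive term. The triangle inequality then gives
$$
|u(x,y)| \le \Big|u(x,y)-\int_y^1 t\Delta u(x,t)\,dt\Big| + \Big|\int_y^1 t\Delta u(x,t)\,dt\Big| .
$$
The first term is controlled almost everywhere by $C\sqrt{\Psi(y)\log\log\Psi(y)}$ by Theorem \ref{maintheorem}. So it suffices to show that, for every $x$ and every $0<y\le 1$,
$$
\Big|\int_y^1 t\Delta u(x,t)\,dt\Big| \le C_1\int_y^1 \frac{\varepsilon(t)}{t}\,dt + C_2 ,
$$
with $C_1,C_2$ independent of $x$ and $y$.

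The key step is a pointwise bound $|t\,\Delta u(x,t)|\lesssim \varepsilon(t)/t + 1$ for $0<t\le1$. I would obtain it from the two hypotheses on $u$, using the fundamental theorem of calculus in the vertical direction together with an averaging argument.

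First, the gradient bound $y^3|\nabla\Delta u|\le\varepsilon(y)$ controls oscillation. For $s\in[t,2t]$ and $|x'-x|\le t$, integrating along segments gives
$$
|\Delta u(x',s)-\Delta u(x,t)| \lesssim t\cdot \sup_{[t,2t]}\varepsilon(\cdot)/t^3 \lesssim \varepsilon(t)/t^2 ,
$$
where the supremum over $[t,2t]$ must be compared with $\varepsilon(t)$. Since $\varepsilon$ is not assumed monotone, it is safer to integrate the oscillation estimate along the vertical line from $1$ down to $t$. That gives $|\Delta u(x,t)-\Delta u(x,1)|\le\int_t^1\varepsilon(s)s^{-3}ds$, but this integral is too large.

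Instead, I would use the Bloch-type bound to pin down the size of $\Delta u$ itself. By the divergence theorem on a box $Q=B(x,t)\times[t,2t]$,
$$
\int_Q\Delta u=\int_{\partial Q}\partial_n u ,
$$
which is bounded by $t^{d}\psi(t/2)\lesssim t^d\psi(t)$ thanks to \eqref{doubling} and \eqref{nonincr}. Hence the average of $\Delta u$ over $Q$ is $O(\psi(t)/t^2)$, and by the oscillation estimate
$$
|\Delta u(x,t)| \lesssim \frac{\psi(t)}{t^2} + \frac{\tilde\varepsilon(t)}{t^2}, \qquad \tilde\varepsilon(t)=\sup_{[t,2t]}\varepsilon .
$$
This only bounds $t|\Delta u|$ by $\psi(t)/t$, which would give $\int_y^1 \psi/t$ rather than $\int_y^1\varepsilon/t$. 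So the averaging must be done more cleverly.

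A better route is to integrate by parts in $t$, or equivalently to use the paper's comparison quantity $T=u-yu_y-\int_y^1h\Delta u\,dh$, writing
$$
\int_y^1 t\Delta u(x,t)\,dt = \int_y^1 t\,\big(\Delta u(x,t)-\langle\Delta u\rangle_{Q_t}\big)\,dt + \int_y^1 t\,\langle\Delta u\rangle_{Q_t}\,dt .
$$
The first piece is $\lesssim\int_y^1 \tilde\varepsilon(t)t^{-1}dt$. For the second piece, the boundary fluxes telescope when the boxes are stacked dyadically, so that $\sum_k 2^{-2k}\langle\Delta u\rangle_{Q_k}$ reduces to differences of horizontal averages of $\partial_y u$ at consecutive dyadic heights, plus lateral flux terms. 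The vertical terms telescope to $O(\psi(y))\le O(\sqrt{\Psi})$. This is plausible since $\psi$ is non-increasing with $\psi^2\log\log$-growth relative to $\Psi$, although that needs checking.

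The main obstacle is this step: making the second piece small. The lateral flux terms do not telescope pointwise in $x$. I would handle them by comparing the horizontal averages of $u$ with $u(x,\cdot)$, which is exactly what Theorem \ref{maintheorem}'s martingale comparison does. So, failing a clean direct estimate, I would reopen the proof of Theorem \ref{maintheorem} and observe that the martingale increments there already differ from $u$'s increments only by terms of size $\varepsilon$. Finally, replacing $\tilde\varepsilon$ by $\varepsilon$ needs a doubling-type control on $\varepsilon$. The constant $C$ then depends on $M$ through \eqref{epsilon1}.
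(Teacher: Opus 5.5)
\textbf{There is a genuine gap.} Your outer plan matches the paper's: triangle inequality, Theorem \ref{maintheorem} for $u-\int_y^1 t\Delta u\,dt$, and a bound on the correction term by $\int_y^1\varepsilon(t)t^{-1}\,dt$ plus a constant. But you never prove that bound. You actually write down the inequality that gives it, $|\Delta u(x,h)-\Delta u(x,1)|\le\int_h^1\varepsilon(t)t^{-3}\,dt$, and then discard it as ``too large''. It is too large only as a pointwise bound for $h|\Delta u(x,h)|$, and you never need a pointwise bound. Multiply by $h$, integrate over $[y,1]$ and exchange the order of integration:
\[
\Big|\int_y^1 h\,\Delta u(x,h)\,dh\Big|\le\int_y^1 h\int_h^1\frac{\varepsilon(t)}{t^{3}}\,dt\,dh+\frac{|\Delta u(x,1)|}{2}=\int_y^1\frac{\varepsilon(t)}{t^{3}}\cdot\frac{t^{2}-y^{2}}{2}\,dt+\frac{|\Delta u(x,1)|}{2}\le\frac12\int_y^1\frac{\varepsilon(t)}{t}\,dt+\frac{|\Delta u(x,1)|}{2}.
\]
The additive term depends on $x$, but it is finite for each $x$, and $\Psi(y)\to\infty$ by Lemma \ref{Psiprop} a). So it vanishes in the $\limsup$, and no uniformity in $x$ is needed. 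Combined with Theorem \ref{maintheorem} and \eqref{epsilon1}, this gives the bound $C+M/2$ almost everywhere. It uses $\varepsilon$ exactly as given, with no monotonicity or doubling.

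\textbf{Why your alternative routes do not close.}
\begin{enumerate}
\item[(i)] The pointwise bound $|t\Delta u(x,t)|\lesssim\varepsilon(t)/t+1$ is not available when $\varepsilon$ is not monotone. The size of $\Delta u$ at height $t$ is inherited from all larger heights, which is exactly what $\int_t^1\varepsilon(s)s^{-3}\,ds$ measures.
\item[(ii)] Your first piece needs $\tilde\varepsilon=\sup_{[t,2t]}\varepsilon$ in place of $\varepsilon$. That requires a regularity of $\varepsilon$ which is not assumed.
\item[(iii)] For the second piece your only input is the gradient bound. Estimating the lateral fluxes through $\psi$ gives $t\,|\langle\Delta u\rangle_{Q_t}|\lesssim\psi(t)/t$, hence only the trivial bound $\int_y^1\psi(t)t^{-1}\,dt$ that the corollary is meant to beat.
\item[(iv)] The vertical terms do not telescope to $O(\psi(y))$. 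In continuous form $\int_y^1 h\,\partial_h^2u\,dh=[\,h\,\partial_hu-u\,]_y^1$, so they reproduce $u$ itself; this is what the identity \eqref{mainlem} encodes, and using it here is circular.
\item[(v)] Reopening the proof of Theorem \ref{maintheorem} cannot help. Its martingale is built from $T$, which already contains $-\int_y^1h\Delta u\,dh$, so it controls $u-\int_y^1t\Delta u\,dt$ and says nothing about the correction term alone. Its increments differ from those of $u$ by terms of size $\psi$, not $\varepsilon$.
\end{enumerate}
Hypothesis \eqref{epsilon1} has to be applied directly to $\int_y^1 t\Delta u\,dt$, and vertical integration followed by Fubini, as above, is how to do it.
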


In the case of Bloch functions, one gets the following result.

% If $\psi \equiv B$ is a constant then $ \displaystyle \Psi (y) =  B^2 \log ( 1/y ) $ for $0<y<1$, and condition \eqref{epsilon1} in Corollary \ref{corlil1} holds provided
% $$
%  \limsup_{y\to 0} \frac{ \displaystyle \int_y^1 \frac{\varepsilon (t)}{t} dt}{ \sqrt{\log \big ( 1/y % \big ) \log \log \log \big ( 1/y \big )}} < \infty . 
% $$
% Elementary computations then give the following sufficient condition:

\begin{cor}\label{corlil2}
Let $u \in C^3 (\R^{d+1}_+)$. Assume that there exists a constant $B>0$ such that
$$
y |\nabla u(x,y)| \leq B , \qquad  y^3 | \nabla \Delta u (x,y)|  \leq \varepsilon (y) \leq B
$$
for $0 < y \leq 1$, $x \in \R^d$. Suppose, in addition, that
\begin{equation}\label{epsilon2}
\limsup_{y\to 0} \,  \varepsilon (y) \sqrt{\frac{\log \big ( 1/y \big )}{ \log \log \log \big ( 1/y \big )}} = D < \infty . 
\end{equation}
Then there exists a constant $C>0$ only depending on $d$, $B$ and $D$, such that \eqref{lil} holds
%\begin{equation}
%    \label{nou}
%    \limsup_{y\to 0} \frac{ \displaystyle |u(x,y)| }%{\sqrt{\log \big ( 1/y \big ) \log \log \log \big ( 1/y \big %)} } \leq C  , 
%\end{equation}
for almost every $x \in \R^d$.
\end{cor}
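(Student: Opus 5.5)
We deduce Corollary \ref{corlil2} from Corollary \ref{corlil1} with the constant gauge $\psi \equiv B$. This $\psi$ is non-increasing, so \eqref{nonincr} holds. The hypothesis $\varepsilon \le B = \psi$ gives \eqref{epsipsi}. Condition \eqref{psi} holds with $A=1$, since $\frac1y\int_0^y B\,dt = B$. The hypotheses of the corollary say exactly that $u \in \mathcal{B}_{\psi,\varepsilon}$. The square function is $\Psi(y) = B^2\log(1/y)$, so
$$\log\log\Psi(y) = \log\bigl(\log\log(1/y) + 2\log B\bigr) = \log\log\log(1/y) + o(1) \quad (y\to 0).$$
Hence $\sqrt{\Psi(y)\log\log\Psi(y)}$ is comparable, with constants depending only on $B$, to $\sqrt{\log(1/y)\log\log\log(1/y)}$ for small $y$. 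Once we verify \eqref{epsilon1}, the conclusion of Corollary \ref{corlil1} is therefore exactly \eqref{lil}.

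The main step is to show that \eqref{epsilon2} implies \eqref{epsilon1}. Set $s=\log(1/t)$. By \eqref{epsilon2}, there is $y_0>0$ such that for $t<y_0$ we have $\varepsilon(t) \le 2D\sqrt{\log\log s/s}$, where $\log\log\log(1/t) = \log\log s$. For $t\in[y_0,1]$ we use $\varepsilon\le B$. This gives
$$\int_y^1 \frac{\varepsilon(t)}{t}\,dt \le B\log(1/y_0) + 2D\int_{s_0}^{\log(1/y)} \sqrt{\frac{\log\log s}{s}}\,ds .$$
Write $S=\log(1/y)$. Since $\log\log s\le\log\log S$ on $[s_0,S]$, the last integral is at most
$$\sqrt{\log\log S}\int_{s_0}^S s^{-1/2}\,ds \le 2\sqrt{S\log\log S}.$$
So the numerator in \eqref{epsilon1} is bounded by $4D\sqrt{\log(1/y)\log\log\log(1/y)} + O(1)$. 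The denominator is $B\sqrt{\log(1/y)\log\log\log(1/y)}\,(1+o(1))$, so the $\limsup$ in \eqref{epsilon1} is finite: $M \le 4D/B$. Small $y_0$ and $s_0$ have to be taken large enough that all iterated logarithms are positive, but these finitely many ranges only contribute bounded terms.

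The remaining points are bookkeeping. The constant $C$ from Corollary \ref{corlil1} depends on $d$, $\psi$ (that is, on $B$) and $M$, hence only on $d$, $B$ and $D$. Converting from the denominator $\sqrt{\Psi\log\log\Psi}$ to the one in \eqref{lil} multiplies the constant by a factor depending only on $B$. The only delicate part is the integral estimate above, and it is routine because $s^{-1/2}\sqrt{\log\log s}$ is essentially the derivative of $2\sqrt{s\log\log s}$.
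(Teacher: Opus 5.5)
Your proposal is correct and follows the paper's route: take $\psi\equiv B$, so that $\Psi(y)=B^2\log(1/y)$, and deduce the result from Corollary \ref{corlil1} after checking that \eqref{epsilon2} implies \eqref{epsilon1}; the paper only calls this check a ``direct computation'', and you carry it out correctly. One small slip: when $D=0$ the bound $\varepsilon(t)\le 2D\sqrt{\log\log s/s}$ cannot hold because $\varepsilon>0$, so use $D+1$ in place of $2D$, which gives $M\le 2(D+1)/B$ and leaves the rest of your argument unchanged.
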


Hence a version of the LIL for smooth (non-harmonic) Bloch functions $u$ holds under the restriction 
\begin{equation}\label{laplac1/2}
y^3 |\nabla \Delta (u) (x,y)| \leq C (\log(1/y))^{-1/2}
\end{equation}
if $0<y<1$ and $x \in \R^d$. It is worth mentioning that the exponent $1/2$ in \eqref{laplac1/2} is sharp. Actually for $0< \alpha <1$, the function $u(x,y)= (\log (1/y))^{\alpha}$ is in $\mathcal{B}_{\psi , \varepsilon }$ where $\psi(y) = \varepsilon (y) = (\log (1/y))^{ \alpha - 1 }$ while the LIL \eqref{lil} can only hold for $\alpha \leq 1/2$. 

\medskip

We know make two remarks.

1. Given two functions $\psi  , \eta  : (0,1] \to (0, +\infty )$ consider the class $\mathcal{K}_{\psi, \eta}$ of functions $ u \in C^2 (\R^{d+1}_{+}) $ such that $y|\nabla u (x,y)|  \leq \psi (y)$ and $ y^2 | \Delta u(x,y)|  \leq \eta (y)$ for any $x \in \R^d$ and $0 <y\leq 1 $. Assume that $\psi$ satisfies \eqref{nonincr} and \eqref{psi} and in addition, that there exists a constant $A' >0$ such that
\begin{equation}
\label{hipotesi}
    \int_y^1 \frac{\eta (t)}{t} dt \leq A' \psi (y), \quad 0<y \leq 1. 
\end{equation}
The proof of Theorem \ref{maintheorem} applies with minor modifications and \eqref{mainlil} holds for any function $u \in \mathcal{K}_{\psi, \eta}$. Since the assumption \eqref{hipotesi} implies that the integral in the numerator of \eqref{mainlil} is bounded by the denominator of \eqref{mainlil}, it follows that \eqref{lil} holds for any function $u \in \mathcal{K}_{\psi , \eta}$. If $\psi$ is a constant function, condition $\eqref{hipotesi}$ reduces to the integrability of $\eta (t) / t$. In this case, one can also prove \eqref{lil} by decomposing $u$ as the sum of a Green potential and a harmonic Bloch function and applying Theorem \ref{mak}. It is worth mentioning that the functions in $\mathcal{B}_{\psi, \varepsilon}$ considered in Theorem \ref{maintheorem} may not satisfy \eqref{hipotesi} and its laplacian may not have a convergent Green potential.

\medskip

2. Regarding the growth of the gradient, one may ask until what extent the Bloch condition can be relaxed and still obtain  self-improvement results. We will show that there exists a growth threshold  condition on the gradient which provides self-improvement results as the LIL in Theorem \ref{maintheorem} (see section \ref{thresub} for details). 

\medskip

Theorem \ref{maintheorem} can be applied to obtain versions of the LIL in two natural situations where harmonicity is not available. The first one concerns logarithms of positive harmonic functions. Let $v$ be a positive harmonic function in the upper half-space $\R^{d+1}_{+}$. Harnack's inequality gives that there exists a constant $C=C(d) >0$ such that 
$$
|\log v(x,y) - \log v (x,1)| \leq C \log (1/y) 
$$ 
for $0<y\leq 1/2$ and $x \in \R^d$. Note that if $v$ is the harmonic extension to $\R_+^{d+1}$ of a finite singular measure in $\R^d$, then $\log v (x,y) \to - \infty$, as $y \to 0$ for almost every $x\in \R^d$.  The next result shows that the right growth of $|\log v|$ is given by a square function $A(v)$ and its corresponding deviation $|\log v(x,y) + A^2 (v) (x,y)|$ is governed by a LIL.

\begin{cor}
    \label{positiveharmonic}
    Let $v$ be a positive harmonic function in the upper half-space $\R^{d+1}_{+}$. Consider the square function 
    $$
    A^2 (v) (x,y) = \int_y^1 \frac{t |\nabla v (x,t)|^2}{v^2 (x,t)} dt , \quad 0<y<1, \, x \in \R^d. 
    $$
    Then there exists a constant $C>0$ only depending on $d$, such that 
    \begin{equation*}
        \limsup_{y \to 0 } \frac{|\log v(x,y) + A^2 (v) (x,y)|}{\sqrt{\log(1/y) \log \log \log (1/y)}} \leq C,
    \end{equation*}
for almost every $x \in \R^d$. 
\end{cor}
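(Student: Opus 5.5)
Set $u=\log v$ and apply Theorem \ref{maintheorem} with $\psi\equiv B$, a constant. For $\varepsilon$ we take a constant as well, possibly after rescaling. The first task is to check that $u\in\mathcal{B}_{\psi,\varepsilon}$. By Harnack's inequality applied to the positive harmonic $v$ on the ball $B((x,y),y/2)$, we have $y|\nabla v(x,y)|\le C(d)v(x,y)$. Hence $y|\nabla u|\le C(d)$, and more generally $y^k|D^k v|\le C_k(d)v$. Since $u=\log v$ and $v$ is harmonic,
$$\Delta u=\frac{\Delta v}{v}-\frac{|\nabla v|^2}{v^2}=-|\nabla u|^2 .$$
Then $\nabla\Delta u=-2(D^2u)\nabla u$. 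Moreover $D^2u=D^2v/v-\nabla v\otimes\nabla v/v^2$, so $y^2|D^2u|\le C(d)$, and therefore $y^3|\nabla\Delta u|\le C(d)$. Thus $u\in\mathcal{B}_{\psi,\varepsilon}$ with $\psi\equiv\varepsilon\equiv C(d)$. Conditions \eqref{nonincr}, \eqref{epsipsi} and \eqref{psi} hold trivially, with $A=1$, and $\Psi(y)=C(d)^2\log(1/y)$. In particular $\log\log\Psi(y)\sim\log\log\log(1/y)$.

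Next we identify the correction term in \eqref{mainlil}. From $\Delta u=-|\nabla v|^2/v^2$ we get
$$\int_y^1 t\,\Delta u(x,t)\,dt=-\int_y^1\frac{t|\nabla v(x,t)|^2}{v^2(x,t)}\,dt=-A^2(v)(x,y),$$
so the numerator in \eqref{mainlil} is exactly $|\log v(x,y)+A^2(v)(x,y)|$. Theorem \ref{maintheorem} then gives
$$\limsup_{y\to0}\frac{|\log v(x,y)+A^2(v)(x,y)|}{\sqrt{\Psi(y)\log\log\Psi(y)}}\le C$$
for almost every $x$. Replacing $\Psi$ by $\log(1/y)$ changes only the constant, because $\Psi(y)=c\log(1/y)$ and $\log\log(c\log(1/y))/\log\log\log(1/y)\to1$. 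The resulting constant depends only on $d$.

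The only step needing care is the uniform higher-derivative estimate $y^2|D^2 v|\le Cv$. This is standard: interior derivative estimates for harmonic functions on $B((x,y),y/2)$ bound $|D^2v(x,y)|$ by $Cy^{-2}\sup_{B((x,y),y/2)}v$, and Harnack's inequality on the ball $B((x,y),3y/4)$ bounds this supremum by $C(d)\,v(x,y)$. No other obstacle is expected, since the whole proof reduces to Theorem \ref{maintheorem} with constant gauges.
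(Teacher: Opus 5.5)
Your proposal is correct and follows essentially the same route as the paper: take $u=\log v$, use Harnack's inequality to place $u$ in $\mathcal{B}_{\psi,\varepsilon}$ with constant gauges depending only on $d$, and note that $\Delta u=-|\nabla v|^2/v^2$, so that $\int_y^1 t\,\Delta u(x,t)\,dt=-A^2(v)(x,y)$; then invoke Theorem~\ref{maintheorem}. You also give more detail than the paper on the bound $y^3|\nabla\Delta u|\le C(d)$, via $\nabla\Delta u=-2(D^2u)\nabla u$ together with interior estimates and Harnack, and on replacing $\sqrt{\Psi\log\log\Psi}$ by $\sqrt{\log(1/y)\log\log\log(1/y)}$.
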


We will also show that there are positive harmonic functions $v$ in $\R_+^{d+1}$ such that $A^2 (v) (x,y) \geq C \log (1/y)$, for any $x\in \R^d$ and $0<y\leq 1/2$. In particular, this will eventually show that the integral in \eqref{mainlil} (Theorem \ref{maintheorem}) can be larger than the denominator.

The second application of Theorem \ref{maintheorem} concerns analytic selfmappings of the unit disc $\D$ of the complex plane. Let $d_h (z,w)$ denote the hyperbolic distance in $\D$ between the points $z,w \in \D$. The Schwarz-Pick Lemma says that any analytic mapping $f:\D \rightarrow \D$ is a contraction in the hyperbolic metric, that is, $d_h (f(z), f(w)) \leq d_h (z,w)$ for any $z,w \in \D$, or equivalently, the hyperbolic derivative defined as
$$
D_h (f) (z) = \lim_{w \to z} \frac{d_h (f(w), f(z))}{d_h (w, z)} = \frac{(1-|z|^2 ) |f'(z)|}{1-|f(z)|^2}
$$
is bounded by $1$ for any $z \in \D$. Moreover, equality at a single point implies equality at every point in the unit disc and that $f$ is an automorphism of $\D$. Our last result can be understood as a holomorphic version of Corollary \ref{positiveharmonic}. 

\begin{cor}
    \label{analyticselfmappings}
    Let $f:\D \rightarrow \D$ be an analytic mapping. Consider the square function
    $$
    A^2 (f) (\xi , r) = \int_0^r \frac{4\log(1/t) |f' (t \xi)|^2}{(1 - |f(t \xi)|^2)^2} dt, \quad 0<r<1, \, \xi \in \partial \D .
    $$
    Then there exists a constant $C >0$ such that
\begin{equation*}
        \limsup_{r \to 1 } \frac{|d_h (f(r \xi), 0) - A^2 (f) (\xi,r)|}{\sqrt{\log(1/(1-r)) \log \log \log (1/(1-r))}} \leq C,
    \end{equation*}
for almost every $\xi \in \partial \D$. 
    
\end{cor}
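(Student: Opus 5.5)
The plan is to apply Theorem \ref{maintheorem}, in its disc version, with $\psi$ and $\varepsilon$ constant, to a smooth substitute for $d_h(f,0)$ whose laplacian is exactly the integrand of $A^2(f)$.

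\textbf{Choice of $u$.} I normalize $d_h(w,0)=\log\frac{1+|w|}{1-|w|}$; this is the normalization consistent with $D_h(f)$. This function is not smooth where $f=0$, so I would not apply the theorem to it directly. Instead I set
$$
u(z)=-\log\bigl(1-|f(z)|^2\bigr).
$$
Then
$$
d_h(f(z),0)-u(z)=2\log\bigl(1+|f(z)|\bigr)\in[0,2\log 2],
$$
so the two differ by a bounded function, which is harmless after dividing by the denominator. Since $\log(1-|f|^2)$ is the log of a hermitian-type expression, a direct computation with $\partial\bar\partial$ gives
$$
\Delta u(z)=\frac{4|f'(z)|^2}{(1-|f(z)|^2)^2}=\frac{4D_h(f)(z)^2}{(1-|z|^2)^2}.
$$
This is exactly the integrand appearing in $A^2(f)$.

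\textbf{Verifying the hypotheses.}
\begin{itemize}
\item Gradient. We have $|\nabla u|=2|f||f'|/(1-|f|^2)\le 2/(1-|z|^2)$ by the Schwarz--Pick Lemma. So $u$ is Bloch, in the disc sense, with a universal constant.
\item Gradient of the laplacian. I would show $(1-|z|)^3|\nabla\Delta u(z)|\le C$ with $C$ universal. Write $\Delta u = 4D_h(f)^2(1-|z|^2)^{-2}$. The factor $D_h(f)\le 1$, and $D_h(f)$ has Euclidean gradient $O\bigl((1-|z|)^{-1}\bigr)$. For the latter, precompose and postcompose with disc automorphisms to move $z$ and $f(z)$ to $0$. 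This reduces the claim to a bound on the coefficient $|a_2|$ of a selfmap fixing $0$, which is controlled by Schwarz--Pick / Cauchy estimates. Hence $\varepsilon\equiv C$ and $\psi\equiv C$ are admissible constants, satisfying \eqref{nonincr}, \eqref{epsipsi} and \eqref{psi}.
\item Square function. With $\psi$ constant, $\Psi(y)\simeq\log(1/y)$, so the denominator is $\sqrt{\log(1/(1-r))\log\log\log(1/(1-r))}$.
\end{itemize}

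\textbf{Transfer to the disc.} Theorem \ref{maintheorem} is stated in $\R^{d+1}_+$ with the correction term $\int_y^1 t\,\Delta u(x,t)\,dt$. I would prove its disc analogue by the same martingale argument, with dyadic arcs of $\partial\D$ and $y$ replaced by $1-r$. The natural radial correction for the disc is $\int_0^r \log(1/t)\,\Delta u(t\xi)\,t\,dt$, which comes from Green's formula with the Green function $\log(1/|z|)$. Passing from this to the stated $A^2(f)$, and to the half-plane weight $(1-t)$, costs only a bounded error:
\begin{itemize}
\item Near $t=1$, the factor $t$ and the difference $\log(1/t)-(1-t)=O\bigl((1-t)^2\bigr)$ each change the integrand by $O(1)$, because $\Delta u=O\bigl((1-t)^{-2}\bigr)$.
\item For $t\le 1/2$, the integral is trivially bounded.
\end{itemize}
Combining the disc version of \eqref{mainlil} for this $u$ with the bounded discrepancy between $u$ and $d_h(f,0)$ yields the Corollary.

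\textbf{Main obstacle.} The delicate step is the disc version of Theorem \ref{maintheorem}. One needs the analogue of the expression $T(x,y)$: namely $u-(1-r)\partial_r u$ minus the Green correction. One then checks that its increments over dyadic Carleson boxes have conditional means controlled by $\varepsilon$ and quadratic variation controlled by $\psi$. An alternative would be a local conformal change to the half-plane near each boundary arc. That works for the distortion of $\nabla u$ and $\nabla\Delta u$, since the Jacobians are smooth up to the boundary. However, it makes the radial integral acquire non-radial errors that must be shown to be $O(1)$, so I would first try the direct dyadic-arc martingale version.
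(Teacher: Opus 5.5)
Your proposal is correct and is essentially the paper's proof: the same function $u=-\log(1-|f|^2)$, the bounded difference $d_h(f,0)-u$, Schwarz--Pick giving constant $\psi$ and $\varepsilon$, and an appeal to the disc version of Theorem~\ref{maintheorem}, which the paper likewise only asserts (your check that swapping the weights $t\log(1/t)$, $\log(1/t)$ and $1-t$ costs only $O(1)$ is care the paper omits). One slip in your sketch of that disc version: since $\partial_r$ points toward $\partial\D$, Green's identity with $\log(1/|z|)$ gives $T(\xi,r)=u(r\xi)+r\log(1/r)\,\partial_r u(r\xi)-\int_0^r\log(1/\rho)\,\Delta u(\rho\xi)\,\rho\,d\rho$, so the first-order term needs a plus sign, not the minus you wrote; for instance, for $u=|z|^2$ this $T$ vanishes identically while your sign gives a non-constant function, and the exact identity is what the martingale construction requires.
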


It is worth mentioning that a conical analog of the square function $A^2(f)$ was considered in \cite{GN}. In this conical analog, the radius and length appearing in the definition of $A^2(f)$ are replaced by a Stolz angle and hyperbolic area. 

The paper is organized as follows. Section 2 contains some preliminary results. Section 3 is devoted to the LIL for dyadic martingales. Theorem \ref{maintheorem} and Corollaries \ref{corlil1} and \ref{corlil2} are proved in section 4, which also includes the discussion of the threshold condition on the gradient giving self-improving. Finally, Corollaries \ref{positiveharmonic} and \ref{analyticselfmappings} are proved in section 5.

% \end{rem}
%....................................................................................................................
%\textcolor[rgb]{1.00,0.00,0.00}{COMENTARIOS INTRODUCTORIOS Y ENUNCIADOS DE LOS RESULTADOS DE LAS PARTES QUE FALTAN(holomorphic functions, positive harmonic, Paley-Wiener, etc... }

%......................................................................................................................

\section{Preliminary results}

Our first auxiliary result collects some elementary estimates about the function $\Psi$.

\begin{lem}
    \label{Psiprop}
Suppose that $\psi$ satisfies conditions \eqref{nonincr} and \eqref{psi} and let $\Psi$ be its square function defined by \eqref{Psi}. Then
\begin{enumerate}
\item[a)] $\Psi (y) \geq \psi^2 (1) \log \big ( \frac{1}{y} \big )$ for $0 < y \leq 1$. In particular, $\Psi (y) \to +\infty$ as $y \to 0^+$.
\item[b)] There exists a constant $C=C(\psi) >0$ such that $\Psi (y) \geq C \psi^2 (y)$ for any $0< y \leq 1/2$.
\item[c)] There exists a constant $C=C(\psi) >0$ such that the function $\Psi$ satisfies the following doubling property:
$$
\Psi(y/2) \leq C \Psi (y)
$$
whenever $0 < y \leq 1/2$.
\end{enumerate}
\end{lem}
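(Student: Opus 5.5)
Parts a) and b) follow directly from the monotonicity assumption \eqref{nonincr}, and c) will be deduced from the doubling property \eqref{doubling} together with b). The only hypothesis that needs care is \eqref{psi}, which enters only through \eqref{doubling}.

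For a), since $\psi$ is non-increasing we have $\psi(t)\geq \psi(1)$ for all $0<t\leq 1$. Hence
$$\Psi(y)=\int_y^1 \frac{\psi^2(t)}{t}\,dt\geq \psi^2(1)\int_y^1\frac{dt}{t}=\psi^2(1)\log\frac1y,$$
and the right-hand side tends to $+\infty$ as $y\to 0^+$.

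For b), fix $0<y\leq 1/2$ and keep only the part of the integral over $[y,2y]\subset(0,1]$. On this interval $\psi(t)\geq\psi(2y)$ by monotonicity. Moreover \eqref{doubling}, applied at the point $2y\leq 1$, gives $\psi(y)\leq 2A\,\psi(2y)$. Therefore
$$\Psi(y)\geq \int_y^{2y}\frac{\psi^2(t)}{t}\,dt\geq \psi^2(2y)\log 2\geq \frac{\log 2}{4A^2}\,\psi^2(y),$$
so b) holds with $C=\log 2/(4A^2)$.

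For c), split the integral at $y$:
$$\Psi(y/2)=\Psi(y)+\int_{y/2}^{y}\frac{\psi^2(t)}{t}\,dt.$$
On $[y/2,y]$ monotonicity gives $\psi(t)\leq\psi(y/2)\leq 2A\,\psi(y)$, so the extra integral is at most $4A^2\psi^2(y)\log 2$. By b) this is at most $C'\Psi(y)$ for $0<y\leq 1/2$. Hence $\Psi(y/2)\leq(1+C')\Psi(y)$, which is c) with constant $1+C'$.

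The one point that needs attention is the range restriction $y\leq 1/2$. It is used in b) so that $2y\leq 1$, which makes \eqref{doubling} applicable and keeps $[y,2y]$ inside the domain of $\psi$. Near $y=1$ the estimate genuinely fails, since $\Psi(1)=0$. Beyond this, I expect no real obstacle.
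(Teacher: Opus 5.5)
Your proof is correct and follows the paper's argument essentially line by line. Parts a) and b) are identical, including the constant $\log 2/(4A^2)$. In c) you bound $\int_{y/2}^{y}\psi^2(t)\,t^{-1}\,dt$ using $\psi(y/2)\le 2A\,\psi(y)$ and then part b). The paper instead goes directly through $\psi^2(y/2)\le 16A^4\psi^2(2y)$ and the integral over $[y,2y]$. This is the same chain of inequalities in a different order, and it gives the same constant $1+16A^4$.
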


\begin{proof}
Part a) easily follows from the assumption that  $\psi $ is non-increasing.  

%VDuring the proof $C$ denotes a possibly variable constant depending only on $\psi$. Part (a) f since $\psi$ is non-increasing:
%$$
% \Psi(y) = \int_{y}^1 \frac{\psi^2 (t)}{t} \, dt \geq \psi^2(1) \int_y^1 \frac{dt}{t} = \psi^2(1) \log % \big ( \frac{1}{y} \big )
% $$
% which proves a). 
Now take $0< y \leq 1/2$. Then, using \eqref{doubling}, one has 
$$
\Psi (y) \geq \int_y^{2y}\frac{\psi^2 (t)}{t} \, dt \geq (\log 2) \psi^2(2y) \geq \frac{\log 2}{4 A^2} \psi^2 (y),
$$
which proves b). Finally, if $0< y \leq 1/2$ then
\begin{eqnarray*}
\int_{y/2}^y \frac{\psi^2 (t)}{t} dt & \leq &  (\log 2)\psi^2 (y/2) \leq (\log 2) 16 A^4 \psi^2 (2y)  \\
& \leq &  16 A^4  \int_y^{2y} \frac{\psi^2 (t)}{t} dt \leq 16 A^4 \Psi (y)   . 
\end{eqnarray*}
Therefore, 
$$
\Psi (y/2) = \int_{y/2}^y \frac{\psi^2 (t)}{t}dt + \int_y^1  \frac{\psi^2 (t)}{t}dt \leq (1+ 16 A^4) \Psi (y) . 
$$
\end{proof}

A cube $Q$ in $\R^d$  is a set of the form $Q = \prod_{j=1}^d [a_j, b_j]$, where $a_j < b_j$, $j=1, \ldots, d$ and $b_1 - a_1 =  \cdots = b_d - a_d = l = l(Q) >0 $, the side length of $Q$. Let $Q = \prod_{j=1}^d [a_j, b_j]$ be a fixed cube in $\R^d$ with side length $l = l(Q) \leq 1$. Given $u \in \mathcal{C}^2 (\R_+^{d+1})$, denote, for $x\in Q$ and $0 < y \leq 1$, 
\begin{equation}\label{T}
T(x,y) = u(x,y) - y \frac{\partial u}{\partial y}(x,y) - \int_y^1 h \Delta u(x,h) dh . 
\end{equation}
Now, let $0 < s < t \leq l $ and consider the block in $\R^{d+1}_{+}$ given by 
\begin{equation}\label{block}
R_{s,t} = \{ (x,y) \in \R^{d+1}_{+} : x\in Q \, , s < y < t \} . 
\end{equation}
Observe that $\partial R_{s,t}$ consists of two horizontal faces parallel to $Q$ at heights $s$ and $t$ and  $2d$ lateral faces  $L_j = Q_j \times [s,t]$, $j=1, \ldots , 2^d$, where $Q_j$ is a $(d-1)$-dimensional cube of the form
$$
Q_j = \{ (x_1,...,x_{j-1}, c_j , x_{j+1},...,x_d ) \, :  \, x_i \in [a_i, b_i ] \, , c_j \in \{ a_j , b_j \} \} . 
$$
Given a cube $ Q \subset \R^d$ and $0<t< l(Q)$, we use the notation
$$
\fint_Q T(x,t) dx = \frac{1}{|Q|} \int_Q T(x,t) dx .
$$
The following lemma controls the vertical variation of $T$ on cubes.
\begin{lem}
Let $u \in \mathcal{B}_{\psi}$. 
%where $\psi$ satisfies \eqref{nonincr} and \eqref{psi}. 
Then for any cube $Q \subset \R^d$  of sidelength $l \leq 1$ and any $0 < s \leq  t \leq l $ we have
\begin{equation}\label{mainlem}
\fint_Q T(x,t) dx = \fint_Q T(x,s) dx + \frac{1}{|Q|}\sum_j \int_{L_j} y \frac{\partial u}{\partial n}, 
\end{equation}
where $L_j$ are the lateral faces of $\partial R_{s,t}$ and $n$ is the unit outward normal. In particular,
\begin{equation}\label{main2}
\Big | \fint_Q  \big ( T(x,t) - T(x,s) \big ) dx \Big | \leq \frac{2d}{l} \int_s^t \psi (y) dy . 
\end{equation}
\end{lem}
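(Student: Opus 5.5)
We apply the divergence theorem to the vector field $y\nabla u$ on the block $R_{s,t}$. Since $\operatorname{div}(y\nabla u) = y\Delta u + \partial u/\partial y$, we get
$$
\int_{R_{s,t}} \Big( y\Delta u + \frac{\partial u}{\partial y}\Big) = \int_{\partial R_{s,t}} y \frac{\partial u}{\partial n}.
$$
Here we need $u\in C^2$, which is implicit since $T$ involves $\Delta u$.

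On the top face $\{y=t\}$ the outward normal is $e_{d+1}$, so the boundary contribution is $\int_Q t\,\partial_y u(x,t)\,dx$. On the bottom face it is $-\int_Q s\,\partial_y u(x,s)\,dx$. The remaining contributions come from the lateral faces $L_j$.

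On the left side, Fubini gives
$$
\int_Q\int_s^t \partial_y u\,dy\,dx = \int_Q \big(u(x,t)-u(x,s)\big)dx,
\qquad
\int_Q\int_s^t h\Delta u(x,h)\,dh\,dx = \int_Q\Big(\int_s^1 - \int_t^1\Big) h\Delta u\,dh\,dx.
$$
Substituting these and rearranging, all terms group as
$$
\int_Q \big(T(x,s)-T(x,t)\big)\,dx \quad\text{up to sign, equated to} \quad \sum_j\int_{L_j} y\,\partial_n u.
$$
We then check the signs to reach exactly \eqref{mainlem}:
$$
\int_Q\Big(u - y\partial_y u - \int_y^1 h\Delta u\Big)\Big|_{s}^{t}
= \int_Q (u(t)-u(s)) - \big[t\,\partial_y u(t) - s\,\partial_y u(s)\big] + \int_s^t h\Delta u.
$$
This equals $\text{LHS} - (\text{top}+\text{bottom}) = \sum_j \int_{L_j}$. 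Dividing by $|Q|$ gives \eqref{mainlem}.

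For \eqref{main2}, each lateral face $L_j = Q_j\times[s,t]$ satisfies
$$
\Big|\int_{L_j} y\,\partial_n u\Big| \le \int_{Q_j}\int_s^t y|\nabla u|\,dy \le l^{d-1}\int_s^t\psi(y)\,dy,
$$
using $y|\nabla u|\le\psi(y)$, which holds since $y\le t\le l\le 1$. There are $2d$ lateral faces (the paper's index "$2^d$" is a typo), and $|Q|=l^d$, so
$$
\Big|\fint_Q \big(T(x,t)-T(x,s)\big)\,dx\Big| \le \frac{2d\,l^{d-1}}{l^d}\int_s^t\psi(y)\,dy = \frac{2d}{l}\int_s^t\psi(y)\,dy.
$$

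The only delicate point is regularity: $u\in C^2$ on the closed block, which is compact and lies inside $\R^{d+1}_+$ because $s>0$, so the divergence theorem and Fubini apply. The case $s=t$ is trivial.
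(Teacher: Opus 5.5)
Your proof is correct and is essentially the paper's argument: the paper applies Green's identity to the pair $y$, $u$ on $R_{s,t}$, which is the same as your divergence-theorem computation for $y\nabla u$ once you evaluate $\int_{R_{s,t}} \partial u/\partial y$ by Fubini, and your estimate on the lateral faces is the same as the paper's. You are also right that the sum runs over $2d$ lateral faces, so the index bound $2^d$ in the paper is a typo.
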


\begin{proof}
By Green's first identity applied to the functions $y$ and $u$ in the domain $R_{s,t}$ we get
\begin{equation}\label{green}
\int_{R_{s,t}} y \Delta u = \int_{\partial R_{s,t} } \big (  y \frac{\partial u}{\partial n} - u \frac{\partial y}{\partial n} \big ) , 
\end{equation}
where, as usual, $n$ denotes unit outward normal. Since $\displaystyle \frac{\partial y}{\partial n} = 0$ on the lateral faces of $\partial R_{s,t}$ and $\displaystyle \frac{\partial y}{\partial n} = \pm 1$ on the horizontal faces of $\partial R_{s,t}$, \eqref{green} reduces to

\begin{eqnarray*}
\int_{Q} (u(x,t) - u(x,s) ) dx  & = &
\displaystyle \int_{Q} \big ( t\frac{\partial u}{\partial y} (x,t) - s \frac{\partial u}{\partial y}(x,s) \big ) dx     \vspace{0.12cm}\\
& +  & \displaystyle  \sum_j \int_{L_j} y \frac{\partial u}{\partial n} - \int_{Q}\int_{s}^t  y \Delta u (x,y) dy dx
\end{eqnarray*}
 which, according to \eqref{T}, is exactly \eqref{mainlem}. Note that, if $L_j = Q_{j} \times [s,t]$ is one of the lateral faces of $\partial R_{s,t}$ then
$$
\Big | \frac{1}{|Q|}\int_{L_j} y \frac{\partial u}{\partial n} \Big | \leq \frac{|Q_j|}{|Q|} \int_{s}^t \psi(y) dy \leq \frac{1}{l}\int_{s}^t \psi(y) dy . 
$$
Adding over $j$, one obtains \eqref{main2}.
\end{proof}

\begin{cor}\label{cor2.1}
Let $u \in \mathcal{B}_{\psi}$ where $\psi$ is an integrable function in $(0,1)$. Then for any cube $Q \subset \R^d$, the limit
\begin{equation}\label{TQ}
\lim_{y\to 0} \fint_{Q} T(x,y) dx \equiv T_Q
\end{equation}
exists.

\begin{proof}
The corollary is a direct consequence of  \eqref{main2} and the fact that $\psi$ is integrable.
\end{proof}

\end{cor}

\begin{cor}\label{corver}
Let $u \in \mathcal{B}_{\psi}$ where $\psi$ satisfies \eqref{nonincr} and \eqref{psi}. Then there exists a constant $C>0$ only depending on $d$ and the constant $A$ in inequality \eqref{psi}, such that for any cube $Q \subset \R^d$  of side length $l \leq 1$ and any $0 < s  \leq  t \leq l $ we have
\begin{equation}\label{main3}
\Big |\fint_Q  \big ( T(x,t) - T(x,s) \big ) dx \Big | \leq C \psi (l). 
\end{equation}
In particular,
\begin{equation}\label{main4}
\big|  \fint_{Q} T(x,t) dx - T_Q \big | \leq C \psi (l)
\end{equation}
whenever $0 < t \leq l$.
 \end{cor}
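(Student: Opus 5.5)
The plan is to deduce \eqref{main3} directly from \eqref{main2} together with hypothesis \eqref{psi}. By \eqref{main2}, for $0<s\le t\le l\le 1$,
$$
\Big|\fint_Q \big(T(x,t)-T(x,s)\big)dx\Big| \le \frac{2d}{l}\int_s^t \psi(y)\,dy \le \frac{2d}{l}\int_0^l \psi(y)\,dy .
$$
The second inequality holds because $\psi>0$. Condition \eqref{psi} applied at the point $y=l\in(0,1]$ then gives $\frac{1}{l}\int_0^l\psi(y)\,dy\le A\psi(l)$. This yields \eqref{main3} with $C=2dA$, a constant depending only on $d$ and $A$ as required.

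For \eqref{main4}, first note that \eqref{psi} already forces $\psi$ to be integrable on $(0,1)$, since $\int_0^1\psi\le A\psi(1)<\infty$. Hence Corollary \ref{cor2.1} applies, and the limit $T_Q$ exists. Next I fix $0<t\le l$ and apply \eqref{main3} with an arbitrary $s\in(0,t]$:
$$
\Big|\fint_Q T(x,t)\,dx - \fint_Q T(x,s)\,dx\Big|\le C\psi(l).
$$
Letting $s\to0$ and using the definition \eqref{TQ} of $T_Q$, the left side converges to $\big|\fint_Q T(x,t)dx - T_Q\big|$. The bound $C\psi(l)$ is preserved in the limit, which gives \eqref{main4}.

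There is no real obstacle here. The only point requiring care is to check that the hypotheses of the earlier results are met: $\psi$ must be integrable so that Corollary \ref{cor2.1} provides $T_Q$, and $l\le1$ so that \eqref{psi} can be evaluated at $y=l$. The monotonicity \eqref{nonincr} is not even needed for this step.
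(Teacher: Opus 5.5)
Your proposal is correct and follows the paper's own argument: bound the right-hand side of \eqref{main2} by $\frac{2d}{l}\int_0^l\psi \le 2dA\,\psi(l)$ using \eqref{psi} at $y=l$, then let $s\to 0$ via Corollary \ref{cor2.1} to get \eqref{main4}. Your added observations, that \eqref{psi} forces $\int_0^1\psi\le A\psi(1)<\infty$ and that \eqref{nonincr} is not actually used here, are accurate and make explicit a point the paper leaves implicit.
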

 \begin{proof}
From \eqref{main2} and \eqref{psi}, it follows
$$
\Big | \fint_Q  \big ( T(x,t) - T(x,s) \big ) dx \Big |  \leq  \frac{2 d}{l}\int_0^l \psi (y) dy \leq 2 d A\psi (l) .
$$
This gives \eqref{main3}. Estimate \eqref{main4} follows from Corollary \ref{cor2.1} letting $s \to 0$.
 \end{proof}

Our next auxiliary result collects estimates on the horizontal oscillation of the function $T$.

\begin{lem}\label{lemhor}
Given $u \in \mathcal{C}^2 (\R_+^{d+1})$ let $T(x,y)$ be the function defined in \eqref{T}. Let $u\in  \mathcal{B}_{\psi , \varepsilon }$ where $\psi , \varepsilon$ satisfy \eqref{nonincr} and \eqref{epsipsi}. Then   
\begin{equation}\label{Thoriz1}
|T(z,y) - T(x,y) | \leq 2 \left(\frac{|z-x|}{y} +1 \right) \psi (y), 
\end{equation}
for $x,z\in \R^d$ and $0 < y \leq 1$. In particular, if  $|z-x| \leq My$, then
\begin{equation}\label{Thoriz2}
|T(z,y) - T(x,y) | \leq 2(M+1)\psi (y) .
\end{equation}
\end{lem}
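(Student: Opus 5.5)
The estimate should follow from splitting $T(z,y)-T(x,y)$ into its three pieces from \eqref{T} and bounding each directly by the hypotheses defining $\mathcal{B}_{\psi,\varepsilon}$. Nothing more than the mean value theorem along horizontal segments is needed. Since all points of the segment $[x,z]\times\{y\}$ lie at the same height $y$, the pointwise bounds on $\nabla u$ and $\nabla\Delta u$ at height $y$ (or at heights $h\ge y$) apply along the whole segment.

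\emph{First term.} By the mean value theorem and $u\in\mathcal{B}_\psi$,
$$|u(z,y)-u(x,y)|\le |z-x|\sup_{w\in[x,z]}|\nabla u(w,y)|\le \frac{|z-x|}{y}\psi(y).$$

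\emph{Second term.} We do not use cancellation here; we bound each summand separately:
$$\Big|y\frac{\partial u}{\partial y}(z,y)-y\frac{\partial u}{\partial y}(x,y)\Big|\le y|\nabla u(z,y)|+y|\nabla u(x,y)|\le 2\psi(y).$$

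\emph{Third term.} This is the only place where the condition on $\nabla\Delta u$ and the assumptions \eqref{nonincr}, \eqref{epsipsi} enter. For each $h\in[y,1]$, the mean value theorem applied to $\Delta u(\cdot,h)$ gives
$$|\Delta u(z,h)-\Delta u(x,h)|\le |z-x|\,\frac{\varepsilon(h)}{h^3}.$$
Hence
$$\Big|\int_y^1 h\big(\Delta u(z,h)-\Delta u(x,h)\big)\,dh\Big|\le |z-x|\int_y^1\frac{\varepsilon(h)}{h^2}\,dh\le |z-x|\,\psi(y)\int_y^1\frac{dh}{h^2}\le \frac{|z-x|}{y}\psi(y).$$
The middle inequality uses $\varepsilon(h)\le\psi(h)\le\psi(y)$ for $h\ge y$, which is where \eqref{epsipsi} and \eqref{nonincr} are needed.

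Adding the three bounds gives $2\frac{|z-x|}{y}\psi(y)+2\psi(y)$, which is \eqref{Thoriz1}. Then \eqref{Thoriz2} is immediate by substituting $|z-x|\le My$.

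There is no real obstacle in this argument. The only point needing care is the third term: we must check that the $h$-integral of $\varepsilon(h)h^{-2}$ produces exactly one factor $1/y$, with constant $1$. This is what forces the use of monotonicity of $\psi$ to pull $\psi(y)$ outside the integral.
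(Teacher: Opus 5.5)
Your proof is correct and follows the paper's argument exactly: the same splitting of $T(z,y)-T(x,y)$ into the three terms from \eqref{T}, the mean value theorem for the first and third terms, the crude bound $2\psi(y)$ for the second, and $\varepsilon\le\psi$ together with monotonicity of $\psi$ to get $\int_y^1 \varepsilon(h)h^{-2}\,dh\le \psi(y)/y$. Summing gives \eqref{Thoriz1}, and \eqref{Thoriz2} follows by substituting $|z-x|\le My$.
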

\begin{proof}
Observe that $T(z, y) - T(x,y) = I -II -III$, where
\begin{eqnarray*}
I & = & u(z,y) - u(x,y) , \vspace{0.14cm}\\
II & = & y\big ( \frac{\partial u}{\partial y}(z,y) - \frac{\partial u}{\partial y}(x,y) \big ) ,   \vspace{0.14cm} \\
III & = & \int_y^1 t( \Delta u(z,t) - \Delta u(x,t) ) dt . 
\end{eqnarray*}
%$$
%| \Delta u(z, h) - \Delta (x,h) | \leq |z-x| \frac{\varepsilon (h)}{h^3}
%$$
Then $|I| \leq |z-x| \psi (y) / y $ and $|II| \leq 2 \psi (y)$. Note that $| \Delta u(z, t) - \Delta (x,t) | \leq |z-x| \varepsilon (t) t^{-3}$. Using \eqref{nonincr} and \eqref{epsipsi}, we deduce  
$$
|III|  \leq  |z-x| \int_y^1 \frac{\varepsilon (t)}{t^2} dt \leq  |z-x| \int_y^1 \frac{\psi (t)}{t^2} dt \leq \frac{|z-x|}{y} \psi (y) . 
$$
%V \begin{eqnarray*}
% |I| & \leq & \frac{|z-x|}{y} \psi (y) \vspace{0.12cm}\\
% |II| & \leq &  2 \psi (y)   \vspace{0.12cm} \\
% |III| & \leq & |z-x| \int_y^1 \frac{\varepsilon (t)}{t^2} dt \leq  |z-x| \int_y^1 \frac{\psi (t)}{t^2} % dt \leq \frac{|z-x|}{y} \psi (y)
% \end{eqnarray*}
So \eqref{Thoriz1} follows. %\eqref{Thoriz2} is immediate from \eqref{Thoriz1}. 
This finishes the proof. 
\end{proof}

\begin{lem}\label{matlemcor}
Let $u\in  \mathcal{B}_{\psi , \varepsilon }$ where $\psi , \varepsilon$ satisfy \eqref{nonincr}, \eqref{epsipsi} and \eqref{psi}. Let $T$ be the function defined in \eqref{T}. Then there exists a constant $C>0$ only depending on $d$, $\psi$ and $\varepsilon$, such that if $Q  \subset \R^d $ is a cube with $l = l(Q) \leq 1$, $x\in Q$ and $0 < t \leq l/2 \leq   y \leq l  $, then
\begin{equation}\label{horivert}
\Big | \fint_{Q} T(z,t) dz - T(x,y) \Big | \leq C \, \psi (l). 
\end{equation}
In particular,
\begin{equation}\label{horivert*}
|T_Q - T(x,y) | \leq C \, \psi (l)
\end{equation}
whenever $x\in Q$ and $l/2 \leq y \leq l$.
\end{lem}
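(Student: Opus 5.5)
The plan is to split the quantity in \eqref{horivert} with the triangle inequality into a vertical part and a horizontal part:
$$
\Big| \fint_Q T(z,t)\,dz - T(x,y)\Big| \leq \Big| \fint_Q \big(T(z,t) - T(z,y)\big)\,dz\Big| + \fint_Q |T(z,y) - T(x,y)|\,dz .
$$
The first term is the vertical variation of the average of $T$ over $Q$ between heights $t$ and $y$. Since $0<t\leq y\leq l$, Corollary \ref{corver} (estimate \eqref{main3}) applies directly and bounds it by $C\psi(l)$, with $C$ depending only on $d$ and $A$.

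For the second term I use the horizontal estimate of Lemma \ref{lemhor}. If $x,z\in Q$, then $|z-x|\leq \sqrt d\, l \leq 2\sqrt d\, y$, because $y\geq l/2$. So \eqref{Thoriz2} with $M=2\sqrt d$ gives
$$
|T(z,y)-T(x,y)|\leq 2(2\sqrt d+1)\psi(y)
$$
for every $z\in Q$. The remaining task is to replace $\psi(y)$ by $\psi(l)$. Since $l/2\leq y$ and $\psi$ is non-increasing, $\psi(y)\leq \psi(l/2)$, and the doubling property \eqref{doubling} gives $\psi(l/2)\leq 2A\psi(l)$. Averaging over $z\in Q$ then bounds the second term by $4A(2\sqrt d+1)\psi(l)$. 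Adding the two bounds proves \eqref{horivert}.

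For \eqref{horivert*}, fix $x\in Q$ and $y\in[l/2,l]$. Let $t\to 0$ in \eqref{horivert}; this is legitimate because \eqref{horivert} holds for every $0<t\leq l/2$ with a constant independent of $t$. By Corollary \ref{cor2.1}, $\fint_Q T(z,t)\,dz\to T_Q$, and the bound passes to the limit.

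I do not expect a real obstacle here. The only points needing care are that \eqref{main3} needs $t\leq y$, which holds since $t\leq l/2\leq y$, and that converting $\psi(y)$ to $\psi(l)$ needs monotonicity together with \eqref{doubling}, i.e. the hypotheses \eqref{nonincr} and \eqref{psi}. Hypothesis \eqref{epsipsi} enters only through Lemma \ref{lemhor}.
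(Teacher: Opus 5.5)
Your proposal is correct and follows the paper's proof essentially step for step. You bound the vertical part by \eqref{main3} and the horizontal part by \eqref{Thoriz2} with $M=2\sqrt d$, convert $\psi(y)\leq\psi(l/2)$ to a multiple of $\psi(l)$ via \eqref{doubling}, and let $t\to 0$ using Corollary \ref{cor2.1} to obtain \eqref{horivert*}.
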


\begin{proof}

By \eqref{main3} in Corollary \ref{corver},
\begin{equation}\label{horivert1}
\Big |  \fint_{Q} ( T(z,t) - T(z,y) ) dz  \Big | \leq C \psi (l)
\end{equation}
where $C$  depends only on $d$ and $\psi$. On the other hand, from \eqref{Thoriz2} in Lemma \ref{lemhor}, for any $z \in Q$ we have 
\begin{equation}\label{Thoriz3}
|T(z,y) - T(x,y)| \leq 2(2\sqrt{d} +1) \psi (y) \leq 2(2\sqrt{d} +1) \psi (l/2) . 
\end{equation}
Now \eqref{horivert} follows from \eqref{horivert1}, \eqref{Thoriz3} and the doubling property \eqref{doubling}. Letting $t\to 0$ in \eqref{horivert} we obtain \eqref{horivert*}. 
\end{proof}

\begin{cor}\label{corTQ}
Let $u\in  \mathcal{B}_{\psi , \varepsilon }$ where $\psi , \varepsilon$ satisfy \eqref{nonincr}, \eqref{epsipsi} and \eqref{psi}. Then there exists a constant $C>0$ only depending on $d$, $\psi$ and $\varepsilon$, such that if $Q, Q' \subset \R^d$ are (closed) cubes such that $l(Q) = l(Q') = l \leq 1$ and $Q\cap Q' \neq \emptyset$ then
\begin{equation}\label{incrT}
|T_Q - T_{Q'}| \leq C \psi (l) . 
\end{equation}
\end{cor}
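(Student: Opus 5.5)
The plan is to compare both $T_Q$ and $T_{Q'}$ with the value of $T$ at a single point, using \eqref{horivert*} from Lemma \ref{matlemcor}, and to bridge the horizontal gap between the two cubes with Lemma \ref{lemhor}.

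First, since $Q \cap Q' \neq \emptyset$, fix a point $x_0 \in Q \cap Q'$ and a height $y = l$ (any $y \in [l/2, l]$ would do). Because $x_0$ lies in both cubes and $l/2 \leq y \leq l$, \eqref{horivert*} applies to each cube and gives
$$
|T_Q - T(x_0,l)| \leq C\psi(l), \qquad |T_{Q'} - T(x_0,l)| \leq C\psi(l).
$$
The triangle inequality then yields \eqref{incrT} with constant $2C$, where $C$ depends only on $d$, $\psi$ and $\varepsilon$.

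If one prefers not to use a common point, take instead $x \in Q$ and $x' \in Q'$. Since the cubes intersect, $|x - x'| \leq 2\sqrt{d}\, l$. Then \eqref{Thoriz2} with $M = 2\sqrt{d}$ bounds $|T(x,l) - T(x',l)|$ by $2(2\sqrt{d}+1)\psi(l)$, and the same triangle inequality concludes.

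There is essentially no obstacle here. The only points to check are that $l \leq 1$, so that \eqref{horivert*} applies as stated, and that the constants from Lemma \ref{matlemcor} do not depend on the cube. Both hold by the statement of that lemma.
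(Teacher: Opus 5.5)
Your main argument is the paper's proof: pick $x \in Q \cap Q'$, apply \eqref{horivert*} to each cube at height $y = l$, and use the triangle inequality to get the constant $2C$. Your alternative route through \eqref{Thoriz2} with $M = 2\sqrt{d}$ is also valid, but it is not needed.
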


\begin{proof}
Choose $x \in Q \cap Q'$. Then, from \eqref{horivert*}
$$
|T_Q - T_{Q'}| \leq |T_Q - T(x,l)| + |T(x,l) - T_{Q'}| \leq 2 C \psi (l) . 
$$
\end{proof}

\section{Dyadic martingales}

For simplicity, we will restrict our attention to the unit cube $Q_0 = [0,1)^d$. By $\mathcal{D}_n$ we denote the family of all \textit{dyadic} cubes in $Q_0$ of \textit{generation} $n$, that is,  those cubes of the form
$$
\prod_{j=1}^d [(k_j -1) 2^{-n}, k_j 2^{-n} )
$$
where $k_j \in \{1, 2,..., 2^n   \}$ for $j= 1,...,d$. A \textit{dyadic martingale} in $Q_0$ is a sequence $\{ T_n \}_{n=0}^{\infty}$ of step functions $T_n : Q_0 \to \R$ satisfying the following two conditions:

\begin{enumerate}
\item  $T_n$ is constant on each dyadic cube of generation $n$. (Hereafter $T_n(Q)$ denotes the value of $T_n$ on $Q$, whenever $Q\in \mathcal{D}_n$).
\item If $Q \in \mathcal{D}_{n-1} $ and $Q = Q^1 \cup Q^2 \cup ... \cup Q^{2^d}$ is the decomposition of $Q$ into  its $2^d$ descendants $Q^j \in \mathcal{D}_{n}$, then
$$
T_{n-1}(Q) = \frac{1}{2^d}\sum_{j=1}^{2^d} T_n (Q^j), 
$$
that is, the value of $T_{n-1}$ on $Q$ is the arithmetic mean of the values of $T_n$ on the $2^d$ dyadic descendants of $Q$ of generation $n$. 
\end{enumerate}
\noindent The first condition means that $T_n$ is measurable with respect to the $\sigma$-algebra $\mathcal{D}_{n-1}^*$ generated by the dyadic cubes of generation $n-1$ and the second one means that the conditional expectation of $T_n$ with respect to $\mathcal{D}_{n-1}^*$ is $T_{n-1}$. The \textit{increments} of the martingale $\{T_n \}$ are defined as $X_0 \equiv T_0$ and $X_k = T_k - T_{k-1}$, $k=1,2,\ldots$. Note that 
$$
T_n = \sum_{k=1}^{n} X_k, \quad n=1,2,\ldots .
$$
Given $x\in Q_0$ and $n\in \N$ there is a unique  dyadic ``tower'' $Q_n \subset Q_{n-1} \subset ... \subset Q_1 \subset Q_0$ such that $Q_k \in \mathcal{D}_k$ and $x\in Q_k$ for $k = 0,1, \ldots , n$. 

The asymptotic behaviour of a dyadic martingale $\{ T_n \}$ is governed by the so called \textit{quadratic variation} $<T>_n$ defined as follows. Fix $n\in \N$ and, for each $x\in Q_0$, consider the dyadic tower $Q_n \subset Q_{n-1} \subset ... \subset Q_1 \subset Q_0$ containing $x$. Each member of the tower, say $Q_{k-1}$, has a dyadic decomposition $Q_{k-1} = Q_k^1 \cup ...\cup Q_k^{2^d}$ where $Q_k^j \in \mathcal{D}_k$ (and there exists a unique $j=1, \ldots, 2^d$ such that $x \in Q_k^j$). Observe that $X_k = T_k - T_{k-1}$ is constant on each $Q_k^j$. The quadratic variation $<T>_n$ of  $\{ T_n \}$ is defined as 
\begin{equation*}\label{quadr}
<T>_n (x) =  \sum_{k=1}^n \frac{1}{2^d} \sum_{j=1}^{2^d} (T_{k}(Q_k^j) - T_{k-1}(x))^2 , \quad x \in Q_0 . 
\end{equation*}
Note that $<T>_n$ is constant on dyadic cubes of generation $n-1$ and for any $x \in Q_0$, the sequence $<T>_n (x)$ is non decreasing. We denote 
$$
<T>_{\infty} (x) = \lim_{n \to \infty} <T>_n (x), \quad x \in Q_0 ,
$$
regardless the limit is finite or infinite.  By well known results in martingale theory (see \cite{S}), $\{ T_n (x) \}$ converges to a finite limit for almost every point $x \in \{x \in Q_0 : \,  <T>_{\infty} (x) < \infty  \}$. Therefore, it is natural to ask  about the asymptotic behaviour of $\{ T_n \}$ on the set $\{ x \in Q_0 : <T>_{\infty} (x) = \infty  \}$. One of the most beautiful results in Probability, the so called \textit{Law of the Iterated Logarithm} (LIL for short), gives an astonishingly precise answer to this question. In our specific setting, it says that there exists a constant $C=C(d)>0$ such that
\begin{equation}\label{LILmar}
\limsup_{n\to \infty} \frac{|T_n (x)|}{ \sqrt{<T>_{n}(x) \log \log <T>_{n}(x)}} \leq C , 
\end{equation}
for almost every $x \in \{ x \in Q_0 :  <T>_{\infty} (x)  = \infty \}$. Moreover \eqref{LILmar} is sharp in the sense that, under certain size restrictions on the increments $|X_k|$, the $\limsup$  is also bounded below for almost every $x \in Q_0$ (see \cite{BM, St}). Note that the LIL \eqref{LILmar} typically gives more information than the Law of Large Numbers. If, for instance, the increments of the martingale are uniformly bounded, that is, $|X_k | = |T_k - T_{k-1}| \leq C $ for any $k=1,2,\ldots$, then the trivial uniform estimate is $\sup \{|T_n (x)|: x \in Q_0 \} \leq C n $, while the Law of Large Numbers would give $|T_n (x)| = o(n)$ a.e. $x \in Q_0$. However, since $<T>_n (x) \, \leq C^2 n $, the LIL \eqref{LILmar} gives that $|T_n (x)| = O( \sqrt{n \log \log n })$ for a.e. $x \in Q_0$, which is a substantial improvement of the previous estimates.

\section{Proofs of main results}

\subsection{Reduction to the martingale setting}
Let $u \in \mathcal{B}_{\psi}$, where $\psi$ satisfies \eqref{nonincr} and \eqref{psi}. In this section we will introduce a dyadic martingale which captures the asymptotic behaviour  of $u(x,y)$ as $y\to 0$. Since the problem is local we will assume hereafter that $x\in Q_0 = [0,1)^d$ and we will define a dyadic martingale in $Q_0$. Note that \eqref{psi} implies in particular the integrability of $\psi$ so, by Corollary \ref{cor2.1}, the limit 
\begin{equation}\label{defmart}
  T_n(Q) \equiv T_Q = \lim_{y \to 0} \frac{1}{|Q|} \int_Q T(x,y) dx
\end{equation}
exists for any $Q\in \mathcal{D}_n$, where $T(x,y)$ is given by \eqref{T}. This assignment clearly defines a dyadic martingale $\{ T_n \}$ in $Q_0$ whose main properties are collected  in the following auxiliary results.

%Let either $u\in  \mathcal{B}_{\psi , \varepsilon }$ or $u \in \mathcal{K}_{\psi , \eta}$ where $\psi , \varepsilon , \eta $ satisfy \eqref{nonincr}, \eqref{epsipsi}, \eqref{psi} and \eqref{eta}.

\begin{lem} \label{martpropert}
\noindent Let $u\in  \mathcal{B}_{\psi , \varepsilon }$ where $\psi$ and $\varepsilon$ satisfy \eqref{nonincr}, \eqref{epsipsi} and \eqref{psi} and let $\{ T_n \}$ be the dyadic martingale in $Q_0$ associated to $u$ as in \eqref{defmart}. Then there exists a constant $C>0$ only depending on $d$, $\psi$ and $\varepsilon$, such that 
\begin{equation}\label{matprop1}
 \Big | u(x,y) - \int_y^1 t \Delta u(x,t) dt - T_n (x) \Big | \leq C \psi (2^{-n}), 
\end{equation}
for any $x\in Q_0$ and  $2^{-(n+1)} \leq y \leq 2^{-n}$, $n=1,2, \ldots$.
\end{lem}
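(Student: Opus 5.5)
Fix $x\in Q_0$, $n\ge 1$ and $2^{-(n+1)}\le y\le 2^{-n}$, and let $Q\in\mathcal{D}_n$ be the dyadic cube of generation $n$ containing $x$, so that $l=l(Q)=2^{-n}$ and $T_n(x)=T_Q$. By the definition \eqref{T},
$$
u(x,y)-\int_y^1 t\Delta u(x,t)\,dt - T_n(x) = \big(T(x,y)-T_Q\big) + y\frac{\partial u}{\partial y}(x,y),
$$
and the plan is to bound these two terms separately.

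The second term is immediate from the definition of $\mathcal{B}_\psi$: $|y\,\partial_y u(x,y)|\le y|\nabla u(x,y)|\le\psi(y)$. Since $y\ge 2^{-(n+1)}$ and $\psi$ is non-increasing by \eqref{nonincr}, we get $\psi(y)\le \psi(2^{-(n+1)})\le 2A\,\psi(2^{-n})$ by the doubling property \eqref{doubling}.

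For the first term, if $y\ge l/2=2^{-(n+1)}$ (and $y\le l$), which is exactly our range, then \eqref{horivert*} in Lemma \ref{matlemcor} applies directly and gives $|T_Q-T(x,y)|\le C\psi(l)=C\psi(2^{-n})$, since $x\in Q$. One should note that \eqref{horivert*} is stated for closed cubes while dyadic cubes are half-open. This causes no difficulty because $x\in Q\subset\overline{Q}$, and $T_Q$ is the same for $Q$ and its closure since boundaries have measure zero.

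Adding the two bounds yields \eqref{matprop1} with a constant depending only on $d$, $A$ (hence $\psi$) and $\varepsilon$. There is essentially no obstacle: all of the analytic content, namely the Green identity and the horizontal oscillation estimate using $\varepsilon\le\psi$, is already contained in Lemma \ref{matlemcor}. The only point needing care is matching the range $2^{-(n+1)}\le y\le 2^{-n}$ with the hypothesis $l/2\le y\le l$ of that lemma, which holds exactly.
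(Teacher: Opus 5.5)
Your proposal is correct and follows the paper's proof essentially verbatim: write the left-hand side as $(T(x,y)-T_n(x)) + y\,\partial_y u(x,y)$, bound the first term by \eqref{horivert*} with $l=2^{-n}$, and bound the second by $\psi(y)\le 2A\,\psi(2^{-n})$ using \eqref{nonincr} and \eqref{doubling}. Your remark on closed versus half-open dyadic cubes is a harmless extra detail that the paper leaves implicit.
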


\begin{proof}
Using the function $T(x,y)$ defined in \eqref{T}, note that 
$$
u(x,y) - \int_y^1 h \Delta u(x,t) dt - T_n(x) = T(x,y) - T_n (x) + y \frac{\partial u}{\partial y}(x,y) . 
$$
From \eqref{horivert*} in Lemma \ref{matlemcor} we get
$$
|T(x,y) - T_n (x) | \leq C \, \psi (2^{-n}), \quad x \in Q_0 , n \geq 1 . 
$$
On the other hand, using the doubling property \eqref{doubling}, 
$$
\big | y \frac{\partial u}{\partial y}(x,y) \big | \leq \psi (y) \leq 2 A \, \psi (2^{-n})
$$
and \eqref{matprop1} follows.
\end{proof}

\begin{lem} \label{martpropert2}
\noindent
Let $u\in  \mathcal{B}_{\psi , \varepsilon }$ where $\psi , \varepsilon$ satisfy \eqref{nonincr}, \eqref{epsipsi} and \eqref{psi} and let $\{ T_n \}$ be the dyadic martingale in $Q_0$ associated to $u$ as in \eqref{defmart}. Then there exists a constant $C>0$ only depending on $d$, $\psi$ and $\varepsilon$, such that 
\begin{eqnarray}
 |T_k (x) - T_{k-1} (x)|  & \leq &    C \, \psi (2^{-k}), \quad k=1,2,\ldots ,  \label{matprop2} \vspace{0.3cm}\\
 <T>_n (x) & \leq  & \,    C \,   \displaystyle \int_{2^{-n}}^1 \frac{\psi^2 (t)}{t} dt, 
 \quad n=1,2,\ldots  \label{matprop3}, 
\end{eqnarray}
for any $x \in Q_0$.
\end{lem}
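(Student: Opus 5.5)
The plan is to obtain \eqref{matprop2} from Corollary \ref{corTQ} and the nesting of dyadic cubes, and then to get \eqref{matprop3} by summing the squared increments against $\psi^2$.

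\textbf{Step 1: the increment bound \eqref{matprop2}.} Fix $x\in Q_0$ and $k\ge 1$. Let $Q_k\subset Q_{k-1}$ be the dyadic cubes of generations $k$ and $k-1$ containing $x$. By definition, $T_k(x)=T_{Q_k}$ and $T_{k-1}(x)=T_{Q_{k-1}}$. The two cubes have different side lengths, so Corollary \ref{corTQ} does not apply to them directly. Instead we use \eqref{horivert*} of Lemma \ref{matlemcor} at a common point.

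Take $y=2^{-k}$. Then $x\in Q_k$ and $l(Q_k)/2\le y\le l(Q_k)$, so $|T_{Q_k}-T(x,2^{-k})|\le C\psi(2^{-k})$. For the parent cube, $l(Q_{k-1})=2^{-k+1}$, so $y=2^{-k}=l(Q_{k-1})/2$ is also admissible. Since $x\in Q_{k-1}$, this gives $|T_{Q_{k-1}}-T(x,2^{-k})|\le C\psi(2^{-k+1})\le C\psi(2^{-k})$, where the last inequality uses that $\psi$ is non-increasing. The triangle inequality then yields \eqref{matprop2}.

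There is one small point to check: Lemma \ref{matlemcor} is stated for closed cubes, while dyadic cubes are half-open. Since $T_Q$ is defined through averages, the closure does not affect its value.

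\textbf{Step 2: the quadratic variation bound \eqref{matprop3}.} In the $k$-th term of $<T>_n(x)$, each sibling $Q_k^j$ of $Q_k$ lies inside $Q_{k-1}$. Applying Step 1 to any point of $Q_k^j$ gives $|T_k(Q_k^j)-T_{k-1}(x)|\le C\psi(2^{-k})$. Hence
\[
<T>_n(x)\le C^2\sum_{k=1}^n\psi^2(2^{-k}).
\]
It remains to compare this sum with the integral. Because $\psi$ is non-increasing, for $t\in[2^{-k},2^{-k+1}]$ we have $\psi(2^{-k})\le \psi(t/2)\le 2A\psi(t)$ by \eqref{doubling}. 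Therefore
\[
\psi^2(2^{-k})\le \frac{4A^2}{\log 2}\int_{2^{-k}}^{2^{-k+1}}\frac{\psi^2(t)}{t}\,dt .
\]
Summing over $k=1,\dots,n$ gives exactly $C\int_{2^{-n}}^1\psi^2(t)t^{-1}\,dt$, which is \eqref{matprop3}.

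\textbf{Expected difficulty.} No step looks like a genuine obstacle; the only delicate point is matching parameters. Specifically, one has to choose a single height $y$ that is admissible in \eqref{horivert*} for both the child and the parent cube, and $y=2^{-k}$ does this. Everything else reduces to earlier lemmas together with the monotonicity and doubling of $\psi$.
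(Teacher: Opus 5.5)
Your proof is correct and is essentially the paper's argument. The paper also bounds $<T>_n(x)$ by $C^2\sum_{k=1}^n\psi^2(2^{-k})$ and then compares this sum with the integral using monotonicity and \eqref{doubling}, exactly as you do. The only difference is in \eqref{matprop2}: the paper applies Corollary \ref{corTQ} to the generation-$k$ sibling cubes (whose closures all contain the centre of the parent) and then uses the martingale averaging identity. You instead apply \eqref{horivert*} directly to the child and the parent at the common admissible height $y=2^{-k}$, which is the same trick that proves Corollary \ref{corTQ} and lets you skip the averaging step.
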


\begin{proof}

Note that \eqref{matprop2} is a direct consequence of  \eqref{incrT} in Corollary \ref{corTQ} and the definition of martingale. As for \eqref{matprop3}, observe that, from \eqref{matprop2},
$$
<T>_n (x) \leq C^2 \sum_{k=1}^n \psi^2 (2^{-k}), \quad x \in Q_0, 
$$
and, since $\psi$ is non-increasing and doubling, \eqref{matprop3} follows in an elementary way.
\end{proof}

\subsection{Proofs of Theorem \ref{maintheorem} and corollaries \ref{corlil1}, \ref{corlil2} }

\begin{proof}[Proof of Theorem \ref{maintheorem}]

Let  $u\in  \mathcal{B}_{\psi , \varepsilon }$. We can restrict our attention to the unit cube $Q_0$. Let $C$ denote a positive constant only depending on $d$, $\psi$ and $\varepsilon$ whose value may change from line to line. Let  $\{ T_n \}$ be the dyadic martingale in $Q_0$ associated to $u$ as in \eqref{defmart}. If  $2^{-(n+1)} \leq y \leq 2^{-n}$ then by \eqref{matprop3}
\begin{equation}\label{Psivar}
\Psi (y)  \geq  \int_{2^{-n}}^1 \frac{\psi^2 (t)}{t} dt \geq C^{-1} <T>_n (x) , \, 
% \geq C \psi^2 (2^{-n}), 
\quad x\in Q_0 . 
\end{equation}
Then, by \eqref{matprop1} and \eqref{Psivar}, 
%there exists a constant $C>0$ such that for any $x \in Q_0 $ we have 
\begin{align*}
& \frac{\displaystyle \big| u(x,y) - \int_y^1 h \Delta u(x,h) dh  \big|}{ \sqrt{\Psi(y) \log \log \Psi (y)}} \leq \\ 
\leq & \frac{C|T_n (x)|}{\sqrt{<T>_n (x) \log \log <T>_n (x)}}  +  \frac{ C \psi (2^{-n})}{ \sqrt{\Psi(y) \log \log \Psi (y)}} .   
\end{align*}
Now, from parts a) and b) of Lemma \ref{Psiprop},
$$
\frac{ \psi (2^{-n})}{\sqrt{\Psi(y) \log \log \Psi (y)}} \leq \frac{C}{\sqrt{\log \log \Psi (y)}} \to 0 \quad \text{as} \, y \to 0^+ , 
$$
so the result follows from the LIL \eqref{LILmar} applied to the martingale $\{ T_n \}$. 
\end{proof}

\begin{proof}[Proof of Corollary \ref{corlil1}]
Fix $x\in Q_0$. From the definition of $\varepsilon (t)$ and Fubini's theorem we get
\begin{eqnarray*}
\Big | \int_y^1 h \Delta u(x,h)dh   \Big | & \leq & \displaystyle \Big |  \int_y^1 h ( \Delta u(x,h) - \Delta u(x,1) ) dh  \Big | + |\Delta u(x,1)| \int_y^1 h dh  \\
& \leq & \displaystyle \int_y^1  h \int_h^1 \frac{\varepsilon (t)}{t^3} dt dh + \frac{1}{2} |\Delta u(x,1)| \\
& \leq & \frac{1}{2}\int_y^1 \frac{\varepsilon (t)}{t} dt +  \frac{1}{2} |\Delta u(x,1)| . 
\end{eqnarray*}
So the corollary follows from \eqref{mainlil} and assumption \eqref{epsilon1}.
\end{proof}

\begin{proof}[Proof of Corollary \ref{corlil2}]
Since  $\Psi (y) \leq B^2 \log \big ( 1/y \big ) $, it is sufficient to check, by direct computation,  that hypothesis \eqref{epsilon2} implies
$$
\limsup_{y\to 0} \frac{\displaystyle \int_y^1 \frac{\varepsilon (t)}{t} dt}{\sqrt{\log \big ( 1/y \big ) \log \log \log \big ( 1/y \big )}} < \infty . 
$$
Then Corollary \ref{corlil2} follows from Corollary \ref{corlil1}.
\end{proof}

\subsection{Threshold condition for self-improving}\label{thresub}

The goal of this subsection is to understand which growth conditions on the gradient give self-improving results as Theorem \ref{maintheorem}. Standard examples of lacunary series provide a negative answer in this direction: for any $0 < \delta < 1$ there exists $u$ harmonic in $\R^2_+$, satisfying $y|\nabla u(x,y)| \leq y^{ - \delta}$ for any $(x,y) \in \R_+^2$ and such that
$$
\limsup_{y\to 0} y^{\delta} |u(x,y)| >0, 
$$
for almost every $x\in \R$.  Since the trivial global bound for $|u(x,y)|$ is $O(y^{-\delta }) $ as $y \to 0 $, no self-improvement occurs. On the other hand,  if $u \in \mathcal{B}_{\psi } $ then
\begin{equation}\label{globalbound}
|u(x,y)| \leq |u(x,1)| + \int_y^1 \frac{\psi (t)}{t} \, dt, \quad x \in \R^d , 
\end{equation}
for $0< y \leq 1$, therefore the vertical growth of $|u|$ is governed by the integral at the right hand side of \eqref{globalbound}. In view of Theorem \ref{maintheorem},  self-improvement will occur provided that
\begin{equation}\label{Psipsi}
\sqrt{\Psi (y) \log \log \Psi (y)} = o \Big ( \int_y^1 \frac{\psi (t)}{t} \, dt \Big ) \, \, \, \, \, \text{as} \, \,  y \to 0 . 
\end{equation}
As usual, the notation $f(y) = o (g(y))$ means that $f(y) / g(y)$ tends to $0$
as $y \to 0$. It is then natural to ask if there exists a ``threshold" growth condition on $\psi$ implying \eqref{Psipsi}. This is the content of the next result.

\begin{theorem}\label{thmthreshold}
Suppose that $\psi :(0,1] \to (0,+\infty )$ satisfies \eqref{nonincr}, \eqref{doubling} and the following concavity condition
\begin{equation}\label{concav}
\psi (y/2) \, \psi(2y) \leq \psi^2 (y) \, , \qquad  0 < y \leq 1/2 . 
\end{equation}
%for $0 < y \leq 1/2$.
If, in addition,
\begin{equation}\label{threshold}
\log \psi (y) = o \Big ( \frac{\log \big ( \frac{1}{y} \big)}{ \log \log \big ( \frac{1}{y} \big)}   \Big )  \, \, \, \, \text{as} \, \, y \to 0 , 
\end{equation}
then \eqref{Psipsi} holds.
\end{theorem}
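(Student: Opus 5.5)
The plan is to discretize along the dyadic scales $y=2^{-k}$ and reduce \eqref{Psipsi} to an inequality about a log-concave sequence. Set $a_k=\psi(2^{-k})$ and $b_k=\log a_k$ for $k\geq 0$.
\begin{itemize}
\item By \eqref{nonincr}, $a_k$ is non-decreasing in $k$.
\item By \eqref{doubling}, $a_{k+1}\leq 2A\,a_k$.
\item Condition \eqref{concav}, applied at $y=2^{-k}$, reads $a_{k+1}a_{k-1}\leq a_k^2$. So $b_k$ is a non-decreasing concave sequence.
\end{itemize}
Using monotonicity and doubling of $\psi$ on each interval $[2^{-k-1},2^{-k}]$, for $2^{-n-1}\leq y\leq 2^{-n}$ we get
$$
\Psi(y)\asymp \sum_{k=0}^{n} a_k^2 ,\qquad \int_y^1\frac{\psi(t)}{t}\,dt\asymp \sum_{k=0}^{n} a_k ,
$$
with constants depending only on $A$. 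Call the two sums $S_2(n)$ and $S_1(n)$. By Lemma \ref{Psiprop} c), replacing $\Psi(y)$ by $S_2(n)$ inside the iterated logarithm only costs a constant. It therefore suffices to prove
$$
S_2(n)\,\log\log S_2(n)=o\big(S_1(n)^2\big),\qquad n\to\infty .
$$

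Since $a_k\leq a_n$ for $k\leq n$, we have $S_2(n)\leq a_n S_1(n)$. The claim then reduces to showing
$$
a_n\log\log S_2(n)=o(S_1(n)).
$$
Hypothesis \eqref{threshold} says $b_n=o(n/\log n)$. Hence $S_2(n)\leq (n+1)a_n^2$ gives $\log S_2(n)\leq \log(n+1)+2b_n=O(n)$, and so $\log\log S_2(n)\leq \log\log n+O(1)$. We are left to show
$$
a_n\log\log n=o\Big(\sum_{k\leq n}a_k\Big).
$$

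This lower bound for $S_1(n)$ is the main step, and concavity of $b$ is exactly what is needed. For $0\leq j\leq n/2$, concavity of $b$ and $b_0=\log\psi(1)$ finite give
$$
b_n-b_{n-j}\leq j\,\frac{b_{n-j}-b_0}{n-j}.
$$
By \eqref{threshold} the right side is $j\cdot o(1/\log n)$. Fix $K>0$ and take $j=\lfloor K\log\log n\rfloor$. Then $j=o(\log n)$, so $b_n-b_k\to 0$ uniformly for $n-j\leq k\leq n$. In particular $a_k\geq a_n/2$ on that range for $n$ large. This yields
$$
S_1(n)\geq \tfrac{j}{2}\,a_n\geq \tfrac{K}{3}\,a_n\log\log n .
$$
Since $K$ is arbitrary, $a_n\log\log n=o(S_1(n))$. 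Going back through the reductions gives \eqref{Psipsi}.

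The delicate point is this concavity estimate, which is where \eqref{concav} and \eqref{threshold} interact. It needs $b_{n-j}-b_0$ to be controlled by $o((n-j)/\log(n-j))$, and $\log(n-j)\asymp\log n$ for $j\leq n/2$, which holds. One must also check that passing between $\Psi(y)$ and $S_2(n)$ inside the $\log\log$ is harmless. This follows from Lemma \ref{Psiprop} a), since $\Psi\to\infty$ and both quantities are comparable up to constants.
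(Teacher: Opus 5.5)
Your reduction breaks at the claim that $\log S_2(n)=O(n)$ implies $\log\log S_2(n)\le\log\log n+O(1)$. Taking logarithms only gives $\log\log S_2(n)\le\log n+O(1)$. Your sharper bound holds only when $a_n$ grows at most polynomially in $n$, as in the Bloch case. Under the hypotheses of the theorem, $\log\log S_2(n)$ can genuinely be of order $\log n$. For $a_n=\exp\big((n+N)/\log^2(n+N)\big)$ with $N$ large (non-decreasing, log-concave, bounded ratios, $\log a_n=o(n/\log n)$), one has $\log\log S_2(n)\ge\log n-2\log\log n+O(1)$. So the reduced claim $a_n\log\log n=o(S_1(n))$ is not sufficient, and the defect is not cosmetic. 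For $a_n=\exp\big(c(n+N)/\log(n+N)\big)$ one finds $S_1(n)\sim a_n\log n/c$ and $S_2(n)\sim a_n^2\log n/(2c)$. There your reduced claim holds, yet $S_2(n)\log\log S_2(n)/S_1(n)^2\to c/2$. Correspondingly, your window argument with $j=\lfloor K\log\log n\rfloor$ only uses $\log a_n=o(n/\log\log n)$, and under that weaker hypothesis the statement is false.

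What you actually need is $a_n\log n=o(S_1(n))$. Your own concavity argument gives it with the longer window $j=\lfloor K\log n\rfloor$. The chord inequality $b_n-b_{n-j}\le j\,(b_{n-j}-b_0)/(n-j)$ together with \eqref{threshold} gives $b_n-b_{n-j}\le K\log n\cdot o(1/\log n)=o(1)$. Hence $S_1(n)\ge (K/3)\,a_n\log n$ for large $n$, and then $a_n\log\log S_2(n)\le a_n(\log n+O(1))=o(S_1(n))$. This is exactly where the full strength of \eqref{threshold} enters.

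With this repair your proof is correct, and it takes a different route from the paper's proof of Lemma \ref{discrete}. The paper applies Stolz's lemma twice to reduce \eqref{discimprov} to $\lambda_n=o(1/\log n)$, where $a_n/a_{n-1}=1+\lambda_n$. It then gets this from the monotonicity of $\lambda_n$ (log-concavity) via $n\lambda_n\le\sum_{k\le n}\lambda_k=o(n/\log n)$. Note that the paper also uses the bound $\log\log\sum_{k\le n}a_k\le 2\log n$, not $\log\log n$. Both proofs rest on the same fact: log-concavity plus the threshold force the recent increments of $\log a_n$ to be $o(1/\log n)$. Yours replaces the two Stolz steps, and the comparison of $G'$ with $H$, by the elementary bound $S_2(n)\le a_nS_1(n)$ and an explicit lower bound for $S_1(n)$.
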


\begin{rem}
The specific logarithmic expression at the right hand side of  \eqref{threshold} comes up, in a natural way, when solving the ODE derived from the identity
$$
\sqrt{\Psi (y) \log \log \Psi (y)} =  \int_y^1 \frac{\psi (t)}{t} dt
$$
together with standard asymptotic estimates. Condition \eqref{threshold} can be seen as a threshold assumption on $\psi$ that guarantees self-improvement. Some concavity property  like \eqref{concav} seems to be necessary for the argument behind Theorem  \ref{thmthreshold} to work. For instance one can take $\psi (y) = (\log (1/y))^\alpha$ where $\alpha >0$. 
\end{rem}

Note that the doubling property \eqref{doubling} provides constants $C_1, C_2 >0$ such that, if $2^{-(n+1)} \leq y \leq 2^{-n}$ for some positive integer $n$, then
\begin{alignat*}{2}
C_1 \sum_{k=1}^n \psi(2^{-k})    & \leq  \int_{y}^1 \frac{\psi(t)}{t} \, dt  && \leq   C_2 \sum_{k=1}^n \psi(2^{-k}) , \\
C_1 \sum_{k=1}^n \psi^2 (2^{-k})  & \leq  \quad  \Psi (y)   && \leq   C_2 \sum_{k=1}^n \psi^2 (2^{-k}) .
\end{alignat*}
Then, taking $a_k = \psi (2^{-k})$, $k=1,2,\ldots$, Theorem \ref{thmthreshold} can be rephrased in discrete terms as follows.

\begin{lem}\label{discrete}
Let $\{ a_n \}$ be a sequence of positive numbers satisfying
\begin{eqnarray}
& a_n \leq a_{n+1} \leq C a_n ,\quad  n=1,2,\ldots , \vspace{0.2cm} \label{crecdoub}\\
& a_{n+1}a_{n-1} \leq a^2_n , \quad n=1,2,\ldots , \vspace{0.2cm} \label{discconcav}\\
& \log a_n = \displaystyle o\Big ( \frac{n}{\log n}  \Big ) ,  \label{discthreshold}
\end{eqnarray}
for some constant $C \geq 1$. Then
\begin{equation}\label{discimprov}
\sqrt{\big ( \sum_{k=1}^n a_k^2 \big )\log \log \big ( \sum_{k=1}^n a_k^2 \big )} = o \Big ( \sum_{k=1}^n a_k \Big ) \, \, \, \, \, \text{as} \, \, n\to \infty . 
\end{equation}
\end{lem}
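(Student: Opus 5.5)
The plan is to compare both sides through the largest term $a_n$. Since $\{a_n\}$ is non-decreasing by \eqref{crecdoub}, write $S_1(n)=\sum_{k=1}^n a_k$ and $S_2(n)=\sum_{k=1}^n a_k^2$. Then
$$
S_2(n)\le a_n S_1(n).
$$
So \eqref{discimprov} reduces to showing $a_n\log\log S_2(n)=o(S_1(n))$. This suffices because then $S_2\log\log S_2\le a_nS_1\log\log S_2=o(S_1^2)$.

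\emph{Step 1: bounding the iterated logarithm.} We have $S_2(n)\le n a_n^2$. By \eqref{discthreshold}, $\log a_n=o(n/\log n)\le n$ for large $n$, so $\log S_2(n)\le \log n+2\log a_n\le 3n$. Hence $\log\log S_2(n)\le \log n+O(1)$. Also $S_2(n)\ge n a_1^2\to\infty$, so the iterated logarithm is eventually well defined. It therefore suffices to prove
$$
a_n\log n=o\big(S_1(n)\big).
$$

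\emph{Step 2: slow growth of the ratios.} Set $r_k=a_{k+1}/a_k\ge 1$. Condition \eqref{discconcav} says exactly that $r_k$ is non-increasing in $k$, i.e. $\log a_k$ is concave. Telescoping gives, for $j\le n$,
$$
\log a_n-\log a_1=\sum_{k=1}^{n-1}\log r_k\ge (j-1)\log r_{j-1}.
$$
Taking $j\approx n/2$ and using \eqref{discthreshold}, we get $\log r_{m}=o(1/\log m)$ as $m\to\infty$.

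\emph{Step 3: lower bound for $S_1$; this is the crucial step.} Fix a large constant $K$ and let $m=\lfloor K\log n\rfloor$. For $n-m\le k\le n$, monotonicity of the ratios gives $a_n\le a_k\, r_{n-m}^{\,n-k}$. Hence
$$
S_1(n)\ge \sum_{k=n-m}^n a_k\ge m\, a_n\, r_{n-m}^{-m}.
$$
By Step 2,
$$
m\log r_{n-m}\le K\log n\cdot o\big(1/\log(n-m)\big)\to 0,
$$
so for large $n$ we get $S_1(n)\ge \tfrac{K}{2}a_n\log n$. Hence $\limsup a_n\log n/S_1(n)\le 2/K$, and since $K$ is arbitrary, $a_n\log n=o(S_1(n))$.

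Combining Steps 1 and 3 with the reduction above gives \eqref{discimprov}. The main point of difficulty is Step 3: the window where $a_k$ is comparable to $a_n$ must have length $\gg\log n$. The concavity \eqref{discconcav} is what turns the averaged threshold \eqref{discthreshold} into this pointwise control of the ratios $r_k$. The upper doubling bound $a_{n+1}\le Ca_n$ is not really needed beyond ensuring that the $r_k$ are finite.
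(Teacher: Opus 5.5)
Your proof is correct. It handles the sums differently from the paper, but its key input is the same.

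\textbf{The paper's route.} It applies Stolz's lemma twice:
\begin{itemize}
\item first, to pass from \eqref{discimprov} to $a_n\log\log S_1(n)=o(S_1(n))$;
\item then, to reduce this to $\lambda_n\log\log S_1(n)\to0$, where $a_n=\prod(1+\lambda_k)$.
\end{itemize}
It concludes from $n\lambda_n\le\sum_{k\le n}\lambda_k=o(n/\log n)$.

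\textbf{Your route.} You replace the first Stolz step by the one-line bound $S_2\le a_nS_1$. You replace the second by an explicit window estimate: on the last $\lfloor K\log n\rfloor$ indices the terms stay $\ge a_n/2$, which gives $S_1\ge\frac K2 a_n\log n$ directly.

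\textbf{The shared step.} Concavity upgrades the averaged threshold \eqref{discthreshold} to the pointwise decay $\log r_m=o(1/\log m)$. This is exactly the paper's $\lambda_n=o(1/\log n)$, obtained by the same averaging inequality.

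\textbf{What your route buys.} It is fully elementary and avoids the bookkeeping the Stolz steps require. That bookkeeping is: comparability of $G'$ at consecutive partial sums, $\log(1+\lambda_k)\asymp\lambda_k$, and $a_k\le C^k$ for the $\log\log$ bound. The paper draws all three from \eqref{crecdoub}. As a result, your proof never uses the upper bound $a_{n+1}\le Ca_n$.

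\textbf{Your closing remark.} It can be sharpened. The $r_k$ are finite simply because $a_k>0$. The upper doubling bound is in any case implied by \eqref{discconcav}, since $r_k\le r_1=a_2/a_1$.

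\textbf{What the paper's route buys.} It is the discrete analogue of differentiating the target relation. That fits the ODE heuristic in the paper's Remark, which explains why $n/\log n$ is the natural threshold.
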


\begin{proof}[Proof of Lemma \ref{discrete}] Let us perform some preliminary reductions. Considering the function $F(x) = x \log \log x$, $x>1$,  \eqref{discimprov} reads 
$$
F \big ( \sum_{k=1}^n a_k^2  \big ) = o \Big ( \big ( \sum_{k=1}^n a_k  \big )^2 \Big ) , 
$$
which is, in turn, equivalent to
$$
\sum_{k=1}^n a_k^2  = o \Big ( G \big ( \sum_{k=1}^n a_k  \big ) \Big ) , 
$$
where $\displaystyle G(x) = x^2 / \log \log x$. By Stolz's lemma and \eqref{crecdoub},  it is sufficient to show that
\begin{equation}\label{stolz1}
\lim_{n\to \infty} \frac{a_n^2}{a_n G' \big ( \sum_{k=1}^n a_k  \Big )} = 0 . 
\end{equation}
Since $\displaystyle G'(x)$ is comparable to  $ H(x) = x / \log \log x $ as $x\to \infty$, \eqref{stolz1} is equivalent to
\begin{equation}\label{stolz2}
\lim_{n\to \infty} \frac{a_n}{H \big ( \sum_{k=1}^n a_k \big )} = 0. 
\end{equation}
Now, another application of Stolz's lemma says that \eqref{stolz2} follows from 
\begin{equation}\label{stolz3}
\lim_{n\to \infty} \frac{(a_n - a_{n-1}) \log \log \big (  \sum_{k=1}^n a_k \big )}{a_n} = 0 . 
\end{equation}
We can assume $a_1 = 1$. It is now convenient to translate \eqref{stolz3} into multiplicative form. For that, put
$$
a_n = \prod_{k=1}^n (1 + \lambda_k ), \quad n=1,2,\ldots .
$$
Then it is easy to check that  \eqref{crecdoub},\eqref{discconcav} and \eqref{discthreshold} translate, respectively, into
\begin{eqnarray}
& 0 \leq \lambda_n \leq C - 1 , \quad n=1,2,\ldots ,  \vspace{0.2cm} \label{crecdoublam}\\
& \lambda_n \geq \lambda_{n+1}, \quad n=1,2, \ldots ,  \vspace{0.2cm} \label{discconcavlam}\\
& \displaystyle \sum_{k=1}^n \lambda_k = \displaystyle o\big ( \frac{n}{\log n}  \big ) .  \label{discthresholdlam}
\end{eqnarray}
On the other hand, since $a_k \leq C^k$, $k=1,2,\ldots$, we deduce
$$
\log \log \big ( \sum_{k=1}^n a_k  \big ) \leq 2 \log n
$$
if $n$ is sufficiently large. Therefore,
$$
\frac{a_n - a_{n-1}}{a_n} \log \log \big ( \sum_{k=1}^n a_k  \big ) \leq  2\lambda_n \log n
$$
and \eqref{stolz3} would be deduced from the estimate
\begin{equation}\label{lamfinal}
\lambda_n = o \big (  \frac{1}{\log n}  \big ) \, \, \, \, \text{as} \, \, n \to \infty . 
\end{equation}
Finally, to show \eqref{lamfinal}, observe that, from \eqref{discconcavlam} and \eqref{discthresholdlam}, we have
$$
n \lambda_n \leq \sum_{k=1}^n \lambda_k = o\big ( \frac{n}{\log n}  \big )
$$
so \eqref{lamfinal} follows. This proves Lemma \ref{discrete} and, consequently, Theorem \ref{thmthreshold}.
\end{proof}

\section{Proofs of corollaries  \ref{positiveharmonic} and \ref{analyticselfmappings}}

\begin{proof}[Proof of Corollary \ref{positiveharmonic}]
Consider $u = \log v$. Harnack's inequality provides a constant $B=B(d)>0$ such that
$$
y |\nabla u (x,y)|= \frac{y |\nabla v(x,y)|}{v(x,y)} \leq B, \quad (x,y) \in \R_+^{d+1}. 
$$
A calculation shows that $\Delta u = - |\nabla v|^2 / v^2$ and we deduce that there exists a constant $C=C(d)>0$ such that
$$
y^3 |\nabla (\Delta u ) (x,y)| \leq C, \quad (x,y) \in \R_+^{d+1}. 
$$
So, $u \in \mathcal{B}_{\psi , \varepsilon }$  by choosing constant functions $\psi  $ and $\varepsilon$. Since
$$
\int_{y}^1 t \Delta u(x,t) dt= -  \int_y^1 \frac{t |\nabla v (x,t)|^2}{v^2 (x,t)} dt  = -  A^2 (v) (x,y)  , \quad 0<y<1, x \in \R^d,
$$
part (a) of Theorem \ref{maintheorem} finishes the proof. 
\end{proof}

Harnack's inequality shows that there exists a constant $B=B(d)>0$ such that $A^2 (v) (x,y) \leq B \log (1/y)$ for any $x \in \R^d $ and $0<y \leq 1/2$. We will now show that this inequality is sharp. Actually, we will construct positive harmonic functions $v$ in $\R_+^{d+1}$ verifying the converse inequality: $A^2 (v) (x,y) \geq C \log (1/y)$ for any $x \in \R^d$, $0<y \leq 1/2$ and a certain constant $C>0$. Functions $v$ with this property will be constructed as harmonic extensions of certain positive measures $\mu$ in $\R^d$ that will be defined inductively, declaring its mass on any dyadic cube. Let $\mu (Q) = 1$ for any dyadic cube $Q \subset \R^d$ of side-length $1$. Fix positive real numbers $\{ p_i : i=1, \ldots , 2^d \}$ such that 
$$
\sum_{i=1}^{2^d} p_i = 1 . 
$$
Fix an integer $n \geq 1$. Let $Q\in \mathcal{D}_{n-1}$ be a dyadic cube of generation $n-1$ and assume, by induction, that $\mu(Q)$ has already been defined. Write 
$$
Q = \bigcup_{j=1}^{2^d} Q^j , 
$$
where $Q^j \in \mathcal{D}_n$. Then set $\mu (Q^j) = p_j \mu (Q)$, $j=1,\ldots, 2^d$. This defines the measure $\mu$. Let $ v= P[\mu]$ denote its harmonic extension to $\R_+^{d+1}$. Assume that not all the weights $p_i$ are the same, that is,  $\max \{p_1 , \ldots , p_{2^d}\} > 2^{-d}$. Then there exist constants $C_i >1$, $i=1,2$, such that for any $x \in \R^d$ and any $0<y\leq 1/2$ we have
$$
\sup \{ v (x,t) : y < t < C_1 y  \} \geq C_2 v (x,y). 
$$
Hence
\begin{eqnarray*}
\int_{ y }^{C_1y} \, \frac{|\nabla v (x,t)|}{v (x,t)} dt & \geq &  \sup \{| \log v (x,t) - \log v (x,y) | :  y < t < C_1  y  \} \\
& \geq &  \log C_2  . 
\end{eqnarray*}
Cauchy-Schwarz inequality gives 
$$
\int_{ y }^{C_1 y} \frac{|\nabla v (x,t)|}{v (x,t)} dt \leq (\log C_1 )^{1/2} \left( \int_{ y }^{C_1 y} \frac{t |\nabla v (x,t)|^2}{v^2  (x,t)} dt \right)^{1/2} 
$$
and one deduces
$$
\int_{y }^{C_1 y} \frac{t |\nabla v (x,t)|^2}{v^2  (x,t)} dt  \geq  \frac{(\log C_2)^2}{\log C_1} . 
$$
Given $0<y \leq 1/2$, let $N$ be the largest non negative integer such that $C_1^N y <1 $. Then 
$$
A^2 (v) (x,y) \geq \sum_{k=1}^N \int_{C_1^{k-1}y}^{C_1^k y}  \frac{t |\nabla v (x,t)|^2}{v^2 (x,t)} dt \geq \frac{N \log^2 (C_2)}{\log C_1}. 
$$
Hence there exists a constant $C>0$ such that $A^2 (v) (x,y) \geq C \log (1/y)$, $0<y \leq 1/2$.

\begin{proof}[Proof of Corollary \ref{analyticselfmappings}]
Consider $u(z) = -\log (1- |f(z)|^2)$, $ z \in \D$. Note that $\sup \{ |u(z) - d_h (f(z), 0)| : z \in \D \} < \infty$. By  Schwarz-Pick Lemma, there exists a constant $B>0$ such that  
$$
(1- |z|^2) |\nabla u (z)| \leq B , \quad z \in \D . 
$$
A calculation gives that $\Delta u (z) = 4 |f'(z)|^2 (1- |f(z)|^2)^{-2}$, $z \in \D$. Hence there exists a constant $C>0$ such that 
$$
(1-|z|^2)^3 |\nabla (\Delta u) (z)| \leq C, \quad z \in \D . 
$$
So, $u \in \mathcal{B}_{\psi , \varepsilon }$ by choosing constant functions $\psi $ and $\varepsilon $. Since
$$
\int_{0}^r \log (1/t) \Delta u( t \xi) dt=  4 \int_0^r \frac{\log (1/t) |f' (t \xi)|^2}{ (1- |f (t \xi)|^2)^2} dt , \quad 0<r<1, \xi \in \partial \D,
$$
part (a) of Theorem \ref{maintheorem} in the setting of the unit disc, finishes the proof. 
\end{proof}

\begin{rem}\label{blaschke}
Note that Schwarz-Pick Lemma gives that $A^2 (f) (\xi, r) \leq \log (1-r)^{-1} $, $0<r<1$, $\xi \in \partial \D$. This estimate is sharp. For instance if $f$ is a finite Blaschke product, there exists a constant $C=C(f) >0$ such that the converse estimate $A^2 (f) (\xi , r) > C \log (1-r)^{-1}$ holds for any $1/2 <r<1$ and $\xi \in \partial \D$. There are also infinite Blaschke products verifying the converse estimate. A Blaschke product $f$ is called of bounded compression if there exists a constant $c = c(f) >0$ such that the hyperbolic diameter of $f(B)$ is bigger than $c$, for any hyperbolic disc $B$ of hyperbolic radius $1$. It turns out that $f$ is of bounded compression if and only if there exists a constant $c_1= c_1 (f) >0$ such that 
$$
\sup \{ |D_h (f) (z)| : z \in B  \} \geq c_1,
$$
for any hyperbolic disc $B$ of hyperbolic radius 1 (see \cite{IN}). This condition implies that there exists a constant $c_2 = c_2 (f) >0$ such that
$$
\int_{(1-r)/2 }^{1-r} \frac{(1- t^2) |f' (t \xi)|^2}{ (1- |f (t \xi)|^2)^2} dt > c_2, \quad 0<r<1, \xi \in \partial \D . 
$$
We deduce that there exists a constant $c_3 = c_3 (f) >0$ such that $A^2 (f) (\xi , r) \geq c_3 \log (1-r)^{-1}$, $1/2 <r<1$, $\xi \in \partial \D$. 
\end{rem}

\end{document}